\numberwithin{equation}{section}   
\theoremstyle{plain}
	\newtheorem{theorem}             {Theorem} 
	\newtheorem{corollary}  [theorem]{Corollary}
	\newtheorem{proposition}[theorem]{Proposition}
\theoremstyle{definition}
\theoremstyle{remark}
	\newtheorem{remark}     [theorem]{Remark}
\DeclareMathOperator{\E}{\mathds{E}}	        
\DeclareMathOperator{\one}{\mathds{1}}	        
\DeclareMathOperator{\MMD}{MMD}	
\DeclareMathOperator*{\argmin}{arg\,min}
\title{Quantization Of Probability Measures In Maximum~Mean~Discrepancy Distance}
\author{
    Zahra Mehraban\footnote{Faculty of Mathematics, University of Technology, D-09111 Chemnitz, Germany}\, %
        \thanks{\href{mailto:zahra.mehraban@s2022.tu-chemnitz.de}{zahra.mehraban@s2022.tu-chemnitz.de}}\and
    Alois Pichler\footnotemark[1]\,~%
        \thanks{\orcidlink{0000-0001-8876-2429}~\href{https://orcid.org/0000-0001-8876-2429}{orcid.org/0000-0001-8876-2429}
}}
\begin{document}
    \maketitle
    \begin{abstract}
      Accurate approximation of probability measures is essential in numerical applications. This paper explores the quantization of probability measures using the \emph{maximum mean discrepancy} (MMD) distance as a guiding metric. We first investigate optimal approximations by determining the best weights, followed by addressing the problem of optimal facility locations.

      To facilitate efficient computation, we reformulate the nonlinear objective as expectations over a product space, enabling the use of stochastic approximation methods. For the Gaussian kernel, we derive closed-form expressions to develop a deterministic optimization approach. By integrating stochastic approximation with deterministic techniques, our framework achieves precise and efficient quantization of continuous distributions, with significant implications for machine learning and signal processing applications.
    \end{abstract}
    \noindent\textbf{Keywords: }Quantization of probability measures · maximum mean discrepancy distance · kernel~methods · optimization\\
    \noindent\textbf{Classification: } 62F12, 62G20, 68T07, 90C31

\section{Introduction}
Accurate approximations of probability measures are essential when working with measures numerically. A commonly used approach is the approximation of probability measures by a simple measure, often referred to as \emph{quantization}.

The accuracy of the approximation has been measured differently in the literature, with significant recent developments relying on the Wasserstein distance.
Foundational work on approximation quality in Wasserstein distance is provided in \citet{GrafLuschgy}, including the approximation quality in Wasserstein distance.
Additional theoretical advancements, along with applications, are extensively covered in the works of Pagès, such as \citet{PagesBallyPrintems, PagesSpaceQuantization}, and \citet{PagesFunctionalQuantization}.

This paper investigates an alternative approach: approximating probability measure using the maximum mean discrepancy (MMD).
In this framework, the probability measure is embedded into a suitable Hilbert space of functions, where a norm is employed for comparison.
Specifically, MMD leverages a reproducing kernel Hilbert space (RKHS), enabling a computationally efficient evaluation for discrepancies between distributions. 

Reliable approximations of probability measures are crucial across numerous applications.
In optimization under uncertainty, these results are indispensable to guarantee continuity of the stochastic program.
Other fields that benefit include statistical learning, information theory, mathematical finance, data compression, signal processing, and machine learning, where infinite-dimensional distributions are often impractical to handle.
The primary challenge is identifying a finite set of points that best represent the original distribution while minimizing the approximation error.
For a deeper exploration of related topics, see \citet{Gersho2012, Gray1998, Widrow2008, Hubara2018, Liang2021} and \citet{Rajasekar2020}.

Recently, MMD has gained prominence as a powerful tool for comparing probability distributions. Unlike the Wasserstein distance, which is computationally expensive due to the optimization problem over transport plans, MMD offers a more efficient alternative. By utilizing kernel functions to assess discrepancies in a high-dimensional feature spaces, the MMD is particularly well-suited for large-scale statistical learning tasks where minimizing computation time is essential.

Quantization in Wasserstein distance suffers from the curse of dimensionality, as discussed in \citet{Bolley}.
For this reason, regularization techniques have been developed and investigated as well, cf.\ \citet{LakshmananPichlerQuantization, LakshmananPichler}, as well as accelerated computational strategies, cf.\ \citet{LakshmananPichlerPotts}. 
More recently, MMD has been applied to quantization problems.
For instance, \citet{Feng2021} introduce Deep Weibull Hashing (DWH), combining MMD quantization with a Weibull constraint to enhance nearest neighbor search.
Experimental results demonstrate that DWH outperforms existing methods by better preserving pairwise similarities. Similarly, \citet{Teymur2021} developed sequential algorithms for MMD minimization, including a new non-myopic approach and a mini-batch variant to improve efficiency.
Other notable contributions include \citet{Pronzato2022}, who evaluate various MMD-based quantization algorithms --~such as kernel herding, greedy minimization, and sequential Bayesian quadrature~-- and \citet{Xu2022}, who introduces adaptive kernel methods for particle-based approximation, achieving improvements in robustness and errors reduction.
Further applications are found in \citet{GuoXu2017} and \citet{XuQuantification2024}.

In this paper, we progress optimizing quantization by minimizing MMD through a Stochastic Gradient Descent (SGD) algorithm.

\paragraph{Outline of the paper. }
Section~\ref{sec:Preliminaries} recalls necessary components from reproducing kernel Hilbert spaces.
Section~\ref{sec:2} lays out the theoretical foundations of MMD and introduces our proposed SGD algorithm. The next section (Section~\ref{sec:3}) presents optimization results across various distributions, highlighting the effectiveness of our approach.
Finally, numerical results are showcased in Section~\ref{sec:Results}.

\section{Mathematical setting}\label{sec:Preliminaries}
To formally define the optimization problem consider a reproducing kernel Hilbert space ${(\mathcal H_k,\,\|\cdot\|_k)}$ of functions on~$\mathcal X$ with kernel~$k$. The inner product that ensures the reproducing property for this space is given by
\[	\langle k(x,\cdot)|\ k(y,\cdot)\rangle_k= k(x,y) 
\]
(cf.\ \citet{Berlinet2004} for a discussion of RKHS spaces).
Given a general measure~$\mu$ on $\mathcal X$, the \emph{kernel embedding function} is defined as
\begin{equation}\label{eq:1}
  \mu_k(\cdot)\coloneqq \int_{\mathcal X} k(\cdot,\xi)\,\mu(\mathrm{d}\xi).
\end{equation}
The mapping $\mu\mapsto \mu_k$ embeds measures~$\mu$ into $\mathcal H_k$, and the \emph{maximum mean discrepancy} distance between two measures~$\mu$ and~$\nu$ is given by
\[ \MMD_k(\mu,\nu)\coloneqq \|\mu_k-\nu_k\|_k.\]
Unlike the Wasserstein distance, the MMD does not require the measures~$\mu$ and~$\nu$ to have the same mass. In other words, they do not necessarily need to satisfy the condition $\mu(\mathcal X)= \nu(\mathcal X)$ for the distance to be well-defined.
Explicitly, the maximum mean discrepancy distance of two measures on $\mathcal X$ is
\begin{align}
  \MMD_k(\mu,\nu)^2 &= \iint_{\mathcal X\times\mathcal X} k(\xi,\xi^\prime)\mu(\mathrm{d}\xi)\mu(\mathrm{d}\xi^\prime)&  \label{eq:2}\\
       & \quad - 2\iint_{\mathcal X\times\mathcal X}k(\xi,\xi^\prime)\mu(\mathrm{d}\xi)\nu(\mathrm{d}\xi^\prime)+ \iint_{\mathcal X\times\mathcal X}k(\xi,\xi^\prime)\nu(\mathrm{d}\xi)\nu(\mathrm{d}\xi^\prime). \nonumber
\end{align}
A recent review on the subject can found in \citet{MuandetScholkopf2017}.

This paper focuses on the quantization of measures, specifically the approximation of a measure~$\mu$ by a simple, discrete measure of the form
\[ \sum_{i=1}^n \mu_i\cdot\delta_{x_i},\]
where the goal is to achieve the best possible approximation.
Our focus is on probability measures.
Given a probability measure~$P$, the quantization problem involves finding a discrete \emph{probability} measure of the form
\begin{equation}
	P^n\coloneqq \sum_{i=1}^n p_i\cdot \delta_{x_i} \label{eq:3}
\end{equation}
that approximates~$P$ by as accurately as possible.
In~\eqref{eq:3}, the weights $p_1,\ p_2,\dots,\ p_n$ are non-negative and $\sum_{i=1}^n p_i=1$, and~$\delta_{x}$ is the Dirac measure located at~$x$.%
\footnote{The Dirac measure is $\delta_a(A)\coloneqq \one_A(a)= \begin{cases}
 1 &	\text{if }a\in A,	\\
 0 &	\text{else}.
\end{cases}$}
The support points $x_1,\dots, x_n$ of the finite measure~$P^n$ are called \emph{quantizers}, the embedding of the approximating measure is the function
\[ P^n_k(\cdot)= \sum_{i=1}^n p_i\cdot k(x_i,\cdot)\in \mathcal H_k,\]
cf.~\eqref{eq:1}.
With that, the quantization problem is 
\begin{subequations}\label{eq:4}
  \begin{align} 
      \text{minimize} \quad & \Vert P_k - P_k^n \Vert_k \label{P1}\\
      \text{in}\quad & \bm{p}= [p_1,p_2,\dots,p_n] \text{ and } \bm{x}= [x_1,\dots,x_n],\label{P5}\\
      \text{subject to} \quad & \sum_{i=1}^n p_i = 1, \label{P2}\\
      & p_i\ge 0, \text{ and} \label{P3}\\
      & x_i\in \mathcal X \text{ for } i=1,\dots,n, \label{P4}
  \end{align}
\end{subequations}
where the expanded objective~\eqref{P1} is
\begin{align}
   \Vert P_k - P_k^n \Vert_k^2 & =  \iint_{\mathcal X\times \mathcal X} k(\xi,\xi')\,\mu(\mathrm{d}\xi)\mu(\mathrm{d}\xi')\nonumber\\
  &\qquad - 2 \sum_{i=1}^n p_i\int_\mathcal X k(\xi,x_i)\,\mu(\mathrm{d}\xi) + \sum_{i,j=1}^n k(x_i, x_j)\,p_i\,p_j \label{DMMD}
\end{align}
by~\eqref{eq:2}.

\section{The quantization weights}\label{sec:2}
We shall address the problem of optimal quantization~\eqref{P1}--\eqref{P4} in consecutive steps: first, we describe the optimal weights in the unconstrained problem~\eqref{P1} and the problem~\eqref{P1}--\eqref{P3}; next, we consider the problem of finding the optimal quantizers~\eqref{P4} separately. Given these results, we shall finally address the problem of locating the optimal quantization points.

\subsection{Approximations of general measures}
Consider the following general version of the problem~\eqref{P1} first to identify the optimal weights for fixed $x_i\in \mathcal X$ ($x_i\not= x_j$ for $i\not=j$):
\begin{align}
    \text{minimize }\quad & \|\mu_k- \mu_k^n\|_k \label{eq:5},\\
    \text{where } \quad& \mu^n= \sum_{i=1}^n\mu_i\,\delta_{x_i} 
  \text{ and }   \mu_1,\dots, \mu_n\in \mathbb R.\nonumber
\end{align}
Here, $\mu$ is a general measure, not necessarily a probability measure.
The optimization problem aims at identifying the optimal weights $\mu_1,\dots, \mu_n$ of the approximating discrete measure $\mu^n= \sum_{i=1}^n \mu_i\,\delta_{x_i}$.
\begin{proposition}\label{prop:180}
  The optimal weights $\bm \mu= [\mu_1,\dots,\mu_n]$ in~\eqref{eq:5} are given by
  \begin{equation}\label{eq:6}
    \bm\mu= \bm{K}^{-1}\,\bm{m},
  \end{equation}
  where $\bm{K}$ is the matrix with entries
  \[ \bm{K}_{ij}= k(x_i,x_j),\qquad i,j=1,\dots,n,\]
  and the entries of the vector~$\bm{m}$ are evaluations of the embedding of the initial function at the support points, 
  \[
    m_i\coloneqq  \mu_k(x_i)= \int_{\mathcal X}k(x_i,\xi)\,\mu(d\xi)\]
  for $i=1,\dots,n$.
\end{proposition}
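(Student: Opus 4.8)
The plan is to recognize problem~\eqref{eq:5} as a least-squares projection inside the Hilbert space $\mathcal H_k$. Since the embedding of the approximating measure is $\mu_k^n=\sum_{i=1}^n\mu_i\,k(x_i,\cdot)$, the objective $\|\mu_k-\mu_k^n\|_k$ measures the distance from the fixed element $\mu_k\in\mathcal H_k$ to the linear subspace $V\coloneqq\operatorname{span}\{k(x_1,\cdot),\dots,k(x_n,\cdot)\}$. The minimizer over $V$ is the orthogonal projection of $\mu_k$ onto $V$, and the sought weights $\bm\mu$ are precisely its coordinates in the (generally non-orthonormal) spanning set.

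First I would expand the squared objective using bilinearity of the inner product:
\[
  \|\mu_k-\mu_k^n\|_k^2=\|\mu_k\|_k^2-2\sum_{i=1}^n\mu_i\,\langle\mu_k,k(x_i,\cdot)\rangle_k+\sum_{i,j=1}^n\mu_i\mu_j\,\langle k(x_i,\cdot),k(x_j,\cdot)\rangle_k.
\]
The reproducing property collapses both kinds of inner product: one has $\langle k(x_i,\cdot),k(x_j,\cdot)\rangle_k=k(x_i,x_j)=\bm K_{ij}$, while $\langle\mu_k,k(x_i,\cdot)\rangle_k=\mu_k(x_i)=m_i$, matching exactly the definitions of $\bm K$ and $\bm m$ in the statement. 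Hence the objective becomes the quadratic form $\|\mu_k\|_k^2-2\,\bm\mu^\top\bm m+\bm\mu^\top\bm K\bm\mu$ in the variable $\bm\mu$.

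Next, because $\bm K$ is a Gram matrix it is positive semidefinite, and once the $k(x_i,\cdot)$ are linearly independent it is positive definite, so the quadratic is strictly convex with a unique minimizer. Setting the gradient $-2\bm m+2\bm K\bm\mu$ to zero yields the normal equations $\bm K\bm\mu=\bm m$, equivalently $\bm\mu=\bm K^{-1}\bm m$, as claimed in~\eqref{eq:6}. I would also mention the equivalent projection argument, which reaches the same equations directly: optimality of $\mu_k^n$ forces the residual $\mu_k-\mu_k^n$ to be orthogonal to each generator, i.e.\ $\langle\mu_k-\mu_k^n,k(x_i,\cdot)\rangle_k=0$, which upon using the reproducing property is exactly $m_i-\sum_{j=1}^n\bm K_{ij}\mu_j=0$ for every~$i$.

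The only genuine obstacle is establishing invertibility of $\bm K$. Distinctness of the $x_i$ alone does not guarantee that the functions $k(x_i,\cdot)$ are linearly independent; this requires the kernel to be strictly positive definite. I would therefore either invoke strict positive definiteness of $k$ (so that $\bm K$ is positive definite for distinct support points) or simply observe that invertibility of $\bm K$ is implicit in writing $\bm K^{-1}$ in~\eqref{eq:6}. With that in hand, the remaining computation is routine.
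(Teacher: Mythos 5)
Your proof is correct and follows essentially the same route as the paper's: both expand $\|\mu_k-\mu_k^n\|_k^2$ as the quadratic $\|\mu_k\|_k^2-2\,\bm\mu^\top\bm m+\bm\mu^\top\bm K\bm\mu$ (the paper via the integral expansion~\eqref{DMMD}, you via the reproducing property) and then solve the normal equations $\bm K\bm\mu=\bm m$ obtained by setting the gradient to zero. Your extra observations — strict convexity certifying the stationary point as the unique minimizer, and the point that invertibility of $\bm K$ needs strict positive definiteness of the kernel rather than mere distinctness of the $x_i$ — tighten details the paper leaves implicit, but do not change the argument.
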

\begin{proof}
  The necessary conditions of the optimization problem with objective~\eqref{eq:5} are
 \[
    0= \frac{\partial L}{\partial \mu_i}= -2\int_{\mathcal X}k(\xi,x_i)\mu(\mathrm{d}\xi)+ 2\sum_{j=1}^n \mu_j\, k(x_i,x_j),
  \]
  where~\eqref{DMMD} expands the Lagrangian
 \[ L(\bm \mu)= L(\mu_1, \ldots, \mu_n) \coloneqq \Vert \mu_k - \mu_k^n \Vert_k^2
 \]
  explicitly. As the matrix $\bm{K}$ is symmetric, the assertion~\eqref{eq:6} is immediate.
  Finally, note that
  \[	\mu_k(x_i)= \int_{\mathcal X}k(x_i,\xi)\mu(\mathrm d\xi)\]
  by the definition in~\eqref{eq:1}, and hence the assertion.
\end{proof}
\begin{corollary}\label{cor:2}
  For fixed support points $\bm x= [x_1,\dots,x_n]$, the optimal maximum mean discrepancy distance is given by
\begin{equation}\label{eq:7}
  \MMD{(\mu,\mu^n)}^2 = \iint_{\mathcal X\times\mathcal X}k(\xi,\xi^\prime)\mu(\mathrm d\xi)\mu(\mathrm d\xi^\prime)- \bm{m}^\top\bm{K}^{-1}\,\bm{m}.
\end{equation}
\end{corollary}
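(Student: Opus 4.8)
The plan is to substitute the optimal weights from Proposition~\ref{prop:180} directly into the expanded squared objective~\eqref{DMMD} and simplify. First I would rewrite~\eqref{DMMD} in the compact matrix form
\[
  \|\mu_k - \mu_k^n\|_k^2 = \iint_{\mathcal X\times\mathcal X} k(\xi,\xi')\,\mu(\mathrm d\xi)\mu(\mathrm d\xi') - 2\,\bm\mu^\top \bm m + \bm\mu^\top \bm K\,\bm\mu,
\]
using the definitions $m_i = \mu_k(x_i)$ and $\bm K_{ij}= k(x_i,x_j)$: the cross term $\sum_{i} \mu_i\int_{\mathcal X} k(\xi,x_i)\,\mu(\mathrm d\xi)$ is precisely $\bm\mu^\top \bm m$, while the quadratic term $\sum_{i,j} k(x_i,x_j)\,\mu_i\mu_j$ is $\bm\mu^\top \bm K\,\bm\mu$.

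Next I would insert the optimal weights $\bm\mu = \bm K^{-1}\bm m$ from~\eqref{eq:6}. Since $\bm K$ is symmetric, so is $\bm K^{-1}$; hence the cross term becomes $-2\,\bm m^\top \bm K^{-1}\bm m$, and the quadratic term collapses to $\bm m^\top \bm K^{-1}\bm K\,\bm K^{-1}\bm m = \bm m^\top \bm K^{-1}\bm m$. Adding these two contributions leaves $-\,\bm m^\top \bm K^{-1}\bm m$, which together with the mass term yields exactly~\eqref{eq:7}.

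The computation itself is elementary algebra; the only point requiring care is the invertibility of $\bm K$. Because $k$ is a reproducing kernel, the Gram matrix $\bm K$ is positive semidefinite, and it is positive definite — hence invertible — whenever the kernel is strictly positive definite and the support points $x_1,\dots,x_n$ are pairwise distinct, which is precisely the standing assumption under which~\eqref{eq:6} was derived. This same positive definiteness also certifies that the stationary point identified in Proposition~\ref{prop:180} is the global minimizer of the convex quadratic objective, so the value obtained after substitution is genuinely the optimal $\MMD$ distance rather than merely a critical value.
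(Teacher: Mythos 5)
Your proof is correct and takes essentially the same route as the paper: substitute the optimal weights $\bm\mu = \bm K^{-1}\bm m$ from Proposition~\ref{prop:180} into the expanded objective~\eqref{DMMD} and simplify using the symmetry of $\bm K$, so that $-2\,\bm m^\top\bm K^{-1}\bm m + \bm m^\top\bm K^{-1}\bm K\bm K^{-1}\bm m = -\bm m^\top\bm K^{-1}\bm m$. Your closing remarks on the invertibility of $\bm K$ (strict positive definiteness of the kernel with pairwise distinct $x_i$) and on convexity certifying that the stationary point is a global minimizer add rigor that the paper leaves implicit, but they do not alter the argument.
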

\begin{proof}
  With $\bm\mu=\bm{K}^{-1}\bm{m}$ from Proposition~\ref{prop:180}, the objective~\eqref{DMMD} is
  \begin{align*}
    \MMD{(\mu,\mu^n)}^2&= \|\mu_k- \mu_k^n\|_k^2\\
              &= \iint_{\mathcal X\times\mathcal X}k(\xi,\xi^\prime)\mu(\mathrm d\xi)\mu(\mathrm d\xi^\prime)- 2\bm{m}^\top\bm{K}^{-1}\,\bm{m}+ \bm{m}^\top \bm{K}^{-1}\bm{K}\bm{K}^{-1}\,\bm{m}\\
              &= \iint_{\mathcal X\times\mathcal X}k(\xi,\xi^\prime)\mu(\mathrm d\xi)\mu(\mathrm d\xi^\prime)- \bm{m}^\top\bm{K}^{-1}\,\bm{m},
  \end{align*}
  thus the assertion.
\end{proof}
\begin{remark}
  
  For~$\xi\in \mathcal X$ fixed, consider the function $f(\cdot)\coloneqq \sum_{i=1}^n w_i\,k(x_i,\cdot)$ with weights $\bm w=\bm K^{-1}k(\xi,\bm x)$, where $k(\xi,\bm x)\coloneqq \bigl(k(\xi,x_1),\dots,k(\xi,x_n)\bigr)$.
  By construction, it holds that $f(x_i)= k(x_i,\xi)$, that is, the function $f(\cdot)$ interpolates $k(\cdot,\xi)$ at $x_1\dots,x_n$.
  By the representer theorem (cf.\ \citet{Schoelkopf2001}), the function~$f(\cdot)$ has the smallest norm ($\|\cdot\|_k$) among all functions interpolating the function $k(\cdot,\xi)$ in the points $x_i$, $i=1,\dots,n$.
  That is,
  \[
    \sum_{i,j=1}^n k(x_i,\xi)\bm K_{ij}^{-1}k(x_j,\xi)= \|f\|_k^2\le \|k(\cdot,\xi)\|_k^2= k(\xi,\xi).\]
  This inequality corresponds to $\MMD(\delta_\xi,\delta_\xi^n)\ge 0$ for the specific measure $\mu=\delta_\xi$ in Corollary~\ref{cor:2}.
\end{remark}

\subsection{Approximation of probability measures}
Even for an initial probability measure $\mu= P$ in~\eqref{eq:5}, the optimal approximating measure $\sum_{i=1}^n \mu_i\cdot\delta_{x_i}$, given in Proposition~\ref{prop:180} above, is --~in general~-- not a probability measure: the weights $\mu_i$ in~\eqref{eq:6} are not necessarily non-negative, and the weights do not sum to one.

We have the following result on the optimal weights of the probability measure approximating a probability measure~$P$.
\begin{theorem}\label{TP}
For fixed support points $\bm x= [x_1,\dots,x_n]$, the probability measure with the smallest maximum mean discrepancy distance in~\eqref{P1}--\eqref{P2} has weights 
\begin{align}\label{eq:8}
   \bm{p}= \left(  \bm{K}^{-1} -\frac{\bm{K}^{-1} \bm{1}  \bm{1}^\top \bm{K}^{-1}}{\bm{1}^\top \bm{K}^{-1} \bm{1}} \right) \bm{m}+\frac{\bm{K}^{-1} \bm{1}}{\bm{1}^\top \bm{K}^{-1} \bm{1}},
\end{align}
where $\bm{m}$ is the vector with entries
\begin{equation}\label{eq:9}
  m_i= \E k(\cdot,x_i)= \int_{\mathcal X}k(x_i,\xi)\,P(\mathrm d\xi)= P_k(x_i)
\end{equation}
and $P_k(\cdot)=\int_{\mathcal X}k(\cdot,\xi)\,P(\mathrm d\xi)\in \mathcal H_k$ is the embedding of~$P$, cf.~\eqref{eq:1}, and $\bm 1\coloneqq [1,\dots,1]$.
\end{theorem}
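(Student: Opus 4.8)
The plan is to treat the squared objective as a convex quadratic in the weight vector $\bm p$ and to dispatch the single equality constraint~\eqref{P2} with one Lagrange multiplier; since only~\eqref{P1}--\eqref{P2} are imposed here, the sign constraints~\eqref{P3} play no role. By the expansion~\eqref{DMMD}, I would write $\|P_k-P_k^n\|_k^2 = C - 2\,\bm p^\top\bm m + \bm p^\top\bm K\,\bm p$, where $C=\iint k(\xi,\xi')P(\mathrm d\xi)P(\mathrm d\xi')$ is independent of~$\bm p$ and $\bm m$ is as in~\eqref{eq:9}. The matrix~$\bm K$ is the Gram matrix of the elements $k(x_i,\cdot)\in\mathcal H_k$, hence positive semidefinite, and it is invertible (as already exploited in Proposition~\ref{prop:180}) because the $x_i$ are distinct; thus $\bm K\succ 0$ and the objective is strictly convex. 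As the feasible set $\{\bm p:\bm 1^\top\bm p=1\}$ is affine, any stationary point of the Lagrangian is the unique global minimizer.

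Next I form $L(\bm p,\lambda)= \bm p^\top\bm K\,\bm p - 2\,\bm p^\top\bm m + \lambda\,(\bm 1^\top\bm p-1)$. Setting $\nabla_{\bm p}L = 2\bm K\bm p - 2\bm m + \lambda\bm 1 = \bm 0$ and solving gives $\bm p = \bm K^{-1}\bm m - \tfrac{\lambda}{2}\,\bm K^{-1}\bm 1$. Imposing $\bm 1^\top\bm p = 1$ then fixes the multiplier through
\[
  \tfrac{\lambda}{2} = \frac{\bm 1^\top\bm K^{-1}\bm m - 1}{\bm 1^\top\bm K^{-1}\bm 1},
\]
where $\bm 1^\top\bm K^{-1}\bm 1>0$ by positive definiteness, so the denominator never vanishes.

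Substituting this multiplier back into the expression for $\bm p$ yields the claim. The only step requiring care --~and the main obstacle, such as it is~-- is the purely algebraic bookkeeping: one recognizes $\bm 1^\top\bm K^{-1}\bm m$ as a scalar and uses the symmetry of $\bm K^{-1}$ to regroup the correction $-\tfrac{\lambda}{2}\bm K^{-1}\bm 1$ into the rank-one term $\bigl(\bm K^{-1}\bm 1\,\bm 1^\top\bm K^{-1}/\bm 1^\top\bm K^{-1}\bm 1\bigr)\bm m$ acting on~$\bm m$, together with the constant offset $\bm K^{-1}\bm 1/\bm 1^\top\bm K^{-1}\bm 1$; this is exactly~\eqref{eq:8}. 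No genuine analytic difficulty arises beyond confirming invertibility and convexity, and everything else reduces to linear algebra.
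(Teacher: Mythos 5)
Your proof is correct, and it reaches \eqref{eq:8} by a cleaner route than the paper. Both arguments start from the same Lagrangian and the same stationarity condition $2\bm K\bm p-2\bm m+\lambda\bm 1=\bm 0$ (the paper's \eqref{eq:10}), but from there the paths diverge. You eliminate $\bm p$ directly using the invertibility of $\bm K$, fix $\lambda$ from the constraint \eqref{P2}, and substitute back --- three lines of linear algebra. The paper instead derives the scalar identity $\lambda=2\bm p^\top\bm m-2\bm p^\top\bm K\bm p$ by contracting the stationarity condition against $\bm p$, assembles the KKT conditions into the bordered system \eqref{K'P'M'}--\eqref{eq:13}, quotes the block-matrix (Woodbury/Sherman--Morrison) inverse \eqref{eq:14}, and then, rather than proving that inverse, verifies a posteriori that the candidate \eqref{eq:8} solves the bordered system. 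Your elimination avoids the Woodbury identity and the verification step entirely. You also supply something the paper omits: by noting that $\bm K\succ 0$ (Gram matrix, invertible since the $x_i$ are distinct and the kernel is strictly positive definite, as already presupposed in Proposition~\ref{prop:180}) and that the feasible set is affine, you conclude the stationary point is the \emph{unique global minimizer}; the paper only ever manipulates necessary conditions and never argues sufficiency. What the paper's heavier formulation buys in exchange is reusability: the bordered system \eqref{eq:13} is exactly the structure that reappears in Section~\ref{sec:6} when the non-negativity constraints \eqref{P3} become active, where a simple elimination no longer suffices. One small caveat on your write-up: strict positive definiteness of $\bm K$ for distinct $x_i$ is a property of the kernel (true for Gaussian and Mat\'ern kernels used here), not of general reproducing kernels, so it is an assumption inherited from the paper rather than a fact you get for free.
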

\begin{proof}
The Lagrangian function of the optimization problem~\eqref{eq:4} with constraints~\eqref{P2} is
\begin{align*}
  L(p_1, \ldots, p_n; \lambda) &\coloneqq \Vert P_k - P_k^n \Vert_k^2 + \lambda \left(\sum_{i=1}^n p_i - 1\right) \\
                  & = \iint_{\mathcal X\times \mathcal X} k(\xi,\xi') \, P(\mathrm{d}\xi) \, P(\mathrm{d}\xi')- 2 \sum_{i=1}^n p_i\int_{\mathcal X} k(\xi,x_i) \, P(\mathrm{d}\xi) \\
                 & \qquad  + \sum_{i,j=1}^n k(x_i, x_j) \, p_i \, p_j +\lambda \left(\sum_{i=1}^n p_i - 1\right).
\end{align*}
To extract the first order conditions, differentiate the Lagrangian function with respect to $p_\ell$ ($\ell= 1,\ldots,n$) and the multiplier $\lambda$, and then equate the derivatives to zero.
This gives the necessary conditions
    \begin{align}\label{eq:10}
        0=\frac{\partial L}{\partial p_\ell} &= -2\int_{\mathcal X} k(\xi,x_\ell)\,P(\mathrm{d}\xi) + 2 \sum_{i=1}^n k(x_i,x_\ell) p_i + \lambda, \quad \ell = 1,\ldots,n,
    \end{align}
    and the constraint
    \begin{align}\label{eq:11}
        0=\frac{\partial L}{\partial \lambda} &= \sum_{i=1}^n p_i - 1.
    \end{align}
Multiplying both sides of the first equation by $p_i$ and summing with respect to the index~$i$, we obtain
\begin{align*}
    -2\sum_{i=1}^n  p_i \int_{\mathcal X} k(\xi,x_i) \, P(\mathrm{d}\xi) &+2 \sum_{i,j=1}^n k(x_i,x_j) p_i p_j + \lambda \sum_{i=1}^n  p_i = 0.
\end{align*}
Since $\sum_{i=1}^n p_i = 1$ from~\eqref{eq:11}, the last term simplifies to $\lambda$.
Therefore, the system of equations becomes
\begin{align*}
 2\sum_{i=1}^n  p_i \int_{\mathcal X} k(\xi,x_i)\, P(\mathrm{d}\xi) &-2 \sum_{i,j=1}^n k(x_i,x_j) p_i p_j- \lambda= 0.
\end{align*}
As $\int_{\mathcal X} k(\xi,x_i)\, P(\mathrm{d}\xi)= \E k(\cdot,x_i)$ for $i=1,.., n$, we have
\begin{equation}
\lambda  = 2\sum_{i=1}^n p_i \E k(\cdot,x_i) -2 \sum_{i,j=1}^n k(x_i,x_j) p_i p_j,
\end{equation}
that is,
\begin{align}\label{eq:12}
\lambda = 2\bm{p}^\top \bm{m} - 2 \bm{p}^\top \bm{K} \bm{p},
\end{align}
where $\bm{p}= {[p_1,p_2,\ldots, p_n]}^\top$ is the vector of probabilities,
$\bm{m}= {[m_1,m_2,\ldots, m_n]}^\top$ is the vector of expected values of the kernel function, and~$\bm{K}$ is the kernel matrix.


Given the value obtained for $\lambda$ and the specified constraint  $\sum_{i=1}^n p_i= 1$, we conclude from~\eqref{eq:10} that
\begin{align}\label{KPM}
  \bm{K}\bm{p} + \bm{1} \left( \lambda/2\right) = \bm{m},
\end{align}
where $\bm{1}$ is an $n$-dimensional column vector of ones. 
Multiplying equation~\eqref{KPM} by $\bm{p}^\top$ yields $\lambda$, and since $\bm{p}^\top\bm{1}=1$, we find the equations
\begin{align}\label{K'P'M'}
  \bm{K}' \bm{p}'=\bm{m}',
\end{align}
where
\begin{align}\label{eq:13}
\bm{K}' = \begin{pmatrix}
\bm{K} & \bm{1} \\
\bm{1}^\top & 0
\end{pmatrix},\,\,\,
\bm{p}^\prime& = \begin{pmatrix}
\bm{p} \\
\lambda/2
\end{pmatrix}\,\,\,\text{and}\,\,\,
\bm{m}' = \begin{pmatrix}
\bm{m} \\
1
\end{pmatrix}.
\end{align}
The Woodbury matrix identity (or Sherman--Morrison formula) can be applied to compute the inverse 
\begin{align}
\mathbf{K'}^{-1} 
&=\label{eq:14}
\begin{pmatrix}
  \bm{K}^{-1}-\bm{K}^{-1}\bm{1}{\left(\bm{1}^\top\bm{K}^{-1}\bm{1}\right)}^{-1}\bm{1}^\top\bm{K}^{-1}  & \bm{K}^{-1}\bm{1}\left(\bm{1}^\top\bm{K}^{-1} \bm{1} \right)^{-1} \\
  \left( \bm{1}^\top\bm{K}^{-1}\bm{1}\right)^{-1} \bm{1}^\top\bm{K}^{-1} & -\left( \bm{1}^\top\bm{K}^{-1}\bm{1}\right)^{-1} .
\end{pmatrix},
\end{align}
From this the optimal solution for~$\bm{p}$ in~\eqref{eq:8} and
\[  \lambda/2= \frac{\bm{1}^\top \bm{K}^{-1} \bm{m}-1}{\bm{1}^\top \bm{K}^{-1} \bm{1}}\] derive.
Instead of verifying~\eqref{eq:14}, we validate the result~\eqref{eq:8} for the optimal weights derived in Theorem~\ref{TP}.
We may verify~\eqref{K'P'M'} with respect to the expression for~$\bm{p}$ from~\eqref{eq:8} and by substituting the value of $\lambda$ from~\eqref{eq:12}.
Specifically, we need to demonstrate~\eqref{eq:13}, that is,
\begin{align*}
\begin{pmatrix}
\bm{K}\bm{p}+ \bm{1}\bm{p}^\top\bm{m}-\bm{1}\bm{p}^\top\bm{K}\bm{p} \\
\bm{1}^\top\bm{p} 
\end{pmatrix}=\begin{pmatrix}
\bm{m} \\
1 
\end{pmatrix}.
\end{align*}

To verify the expression, we calculate each term on the left-hand side. 
First, note that
\begin{align*}
\bm{K}\bm{p} = \bm{m} - \frac{\bm{1} \, \bm{1}^\top \bm{K}^{-1} \bm{m}}{\bm{1}^\top \bm{K}^{-1} \bm{1}} + \frac{\bm{1}}{\bm{1}^\top \bm{K}^{-1} \bm{1}},
\end{align*}
where we assume~\eqref{eq:8}.
Similarly, we have
\begin{align*}
\bm{1} \bm{p}^\top \bm{m} = \bm{1} \left( \bm{m}^\top \bm{K}^{-1} \bm{m} - \frac{\bm{m}^\top \bm{K}^{-1} \bm{1} \, \bm{1}^\top \bm{K}^{-1} \bm{m}}{\bm{1}^\top \bm{K}^{-1} \bm{1}} + \frac{\bm{1}^\top \bm{K}^{-1} \bm{m}}{\bm{1}^\top \bm{K}^{-1} \bm{1}} \right)
\end{align*}
and
\begin{align*}
-\bm{1} \bm{p}^\top \bm{K} \bm{p} = -\bm{1} \left( \bm{m}^\top \bm{K}^{-1} \bm{m} - \frac{\bm{m}^\top \bm{K}^{-1} \bm{1} \, \bm{1}^\top \bm{K}^{-1} \bm{m}}{\bm{1}^\top \bm{K}^{-1} \bm{1}} + \frac{\bm{1}^\top \bm{K}^{-1} \bm{m}}{\bm{1}^\top \bm{K}^{-1} \bm{1}} \right).
\end{align*}

By combining these expressions, we observe that the terms involving $\bm{1}$ and $\bm{m}$ cancel appropriately, leaving
\begin{align*}
\bm{K}\bm{p} + \bm{1} \bm{p}^\top \bm{m} - \bm{1} \bm{p}^\top \bm{K} \bm{p} = \bm{m}.
\end{align*}

Thus, the left-hand side reduces to $\begin{pmatrix} \bm{m} \\ 1 \end{pmatrix}$, as required.
This confirms that the optimal weights provide a precise representation of the continuous  distribution.
\end{proof}

The following corollary characterizes the objective by the optimal probabilities identified above.
\begin{corollary}\label{cor:412}
  For fixed support points $\bm x= [x_1,\dots,x_n]$, the objective with optimal weights is
  \begin{align*}
    \MMD(P,P^n)^2&= \iint_{\mathcal X\times\mathcal X}k(\xi,\xi^\prime)P(\mathrm d\xi)P(\mathrm d\xi^\prime)- \bm m^\top \bm K^{-1}\bm m+ \frac{(\bm 1^\top\bm K^{-1}\bm m-1)^2}{\bm 1^\top\bm K^{-1}\bm 1}.
\end{align*}
\end{corollary}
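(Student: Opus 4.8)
The plan is to evaluate the objective \eqref{DMMD} at the optimal weights \eqref{eq:8} from Theorem~\ref{TP}, but to route the computation through the first-order conditions already established in that proof rather than substituting \eqref{eq:8} blindly into the quadratic form. Writing $C\coloneqq \iint_{\mathcal X\times\mathcal X}k(\xi,\xi')P(\mathrm d\xi)P(\mathrm d\xi')$ for the constant term, the objective \eqref{DMMD} with $\mu= P$ reads $\MMD(P,P^n)^2= C- 2\bm p^\top\bm m+ \bm p^\top\bm K\bm p$, using $m_i= \int_{\mathcal X}k(\xi,x_i)P(\mathrm d\xi)$ from \eqref{eq:9}. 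It therefore suffices to show that the quadratic-plus-linear part equals $-\bm m^\top\bm K^{-1}\bm m+ (\bm 1^\top\bm K^{-1}\bm m-1)^2/(\bm 1^\top\bm K^{-1}\bm 1)$.

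First I would use the stationarity relation \eqref{KPM}, namely $\bm K\bm p= \bm m- \bm 1\,\lambda/2$, together with the normalization $\bm p^\top\bm 1= 1$ from \eqref{eq:11}. Multiplying \eqref{KPM} on the left by $\bm p^\top$ collapses the quadratic term into $\bm p^\top\bm K\bm p= \bm p^\top\bm m- \lambda/2$, so that the whole objective reduces to the purely linear expression $C- \bm p^\top\bm m- \lambda/2$. This is the key simplification, since it removes any need to expand the rank-one corrections in \eqref{eq:8} against the matrix $\bm K$.

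It then remains to substitute the two scalars. The multiplier is $\lambda/2= (\bm 1^\top\bm K^{-1}\bm m-1)/(\bm 1^\top\bm K^{-1}\bm 1)$, already derived in the proof of Theorem~\ref{TP}, and $\bm p^\top\bm m$ follows from \eqref{eq:8} directly. Abbreviating $a\coloneqq \bm 1^\top\bm K^{-1}\bm 1$, $b\coloneqq \bm 1^\top\bm K^{-1}\bm m$ and $c\coloneqq \bm m^\top\bm K^{-1}\bm m$, one gets $\lambda/2= (b-1)/a$ and $\bm p^\top\bm m= c- b^2/a+ b/a$, so that $-\bm p^\top\bm m- \lambda/2= -c+ (b^2-b)/a- (b-1)/a= -c+ (b-1)^2/a$, which is precisely the claimed formula once the definitions of $a$, $b$, $c$ are reinserted.

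I do not expect a genuine obstacle, as the statement is an algebraic identity. The only points requiring care are the bookkeeping of the rank-one terms when computing $\bm p^\top\bm m$, and the observation that $(b^2-b)-(b-1)=(b-1)^2$ produces the perfect square in the numerator. Channelling the computation through \eqref{KPM} instead of a brute-force substitution of \eqref{eq:8} into $\bm p^\top\bm K\bm p$ is what keeps the argument short; the direct substitution is also feasible but needlessly heavier.
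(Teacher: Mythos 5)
Your proof is correct, but it takes a genuinely different route from the paper's. The paper proves Corollary~\ref{cor:412} by brute force: it substitutes the explicit formula~\eqref{eq:8} into the objective~\eqref{DMMD}, expands the full quadratic form $\bm p^\top\bm K\bm p$ including all the rank-one correction terms, and then collects and cancels terms across several displays. You instead exploit the stationarity relation~\eqref{KPM}, $\bm K\bm p+\bm 1\,\lambda/2=\bm m$, together with the normalization $\bm p^\top\bm 1=1$ from~\eqref{eq:11}: left-multiplying~\eqref{KPM} by $\bm p^\top$ collapses the quadratic term to $\bm p^\top\bm K\bm p=\bm p^\top\bm m-\lambda/2$, so the objective reduces to the linear expression $C-\bm p^\top\bm m-\lambda/2$, and only two scalars remain to be evaluated ($\lambda/2=(b-1)/a$ and $\bm p^\top\bm m=c-(b^2-b)/a$ in your notation, both legitimate to import since the paper's proof of Theorem~\ref{TP} verifies that~\eqref{eq:8} satisfies the system~\eqref{K'P'M'}, which comprises exactly~\eqref{KPM} and the normalization). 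What your route buys is brevity and robustness: the heavy bookkeeping of rank-one terms against $\bm K$ disappears, and the perfect square $(b-1)^2$ emerges from a one-line scalar identity rather than from matching coefficients across a long expansion --- indeed the paper's intermediate displays contain minor typographical slips (e.g.\ a denominator $\bm 1^\top\bm K\bm 1$ where $\bm 1^\top\bm K^{-1}\bm 1$ is meant) of precisely the kind your approach avoids. What the paper's direct substitution buys, in exchange, is independence from the KKT machinery: it verifies the closed-form value using only the formula~\eqref{eq:8} itself, without assuming that $\bm p$ satisfies the first-order system, which serves as an additional consistency check on Theorem~\ref{TP}.
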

\begin{proof}
  Involving the optimal vector $\bm p$ in~\eqref{eq:8} in Theorem~\ref{TP} from~\eqref{DMMD} 
  \begin{align*}
  {\MMD(P,P^n)}^2&= \|P_k- P_k^n\|_k^2 \\
      &= \iint_{\mathcal X\times\mathcal X}k(\xi,\xi^\prime)P(\mathrm d\xi)P(\mathrm d\xi) \\
      &\quad -2\left(\bm m^\top\bm K^{-1}-\frac{\bm m^\top\bm K^{-1}\bm 1\bm 1^\top\bm K^{-1}}{\bm 1^\top\bm K^{-1}\bm 1}+\frac{\bm 1^\top\bm K^{-1}}{\bm 1^\top\bm K^{-1}\bm 1}\right)\bm m\\
      &\quad +\left(\bm m^\top\bm K^{-1}-\frac{\bm m^\top\bm K^{-1}\bm 1\bm 1^\top\bm K^{-1}}{\bm 1^\top\bm K^{-1}\bm 1}+\frac{\bm 1^\top\bm K^{-1}}{\bm 1^\top\bm K^{-1}\bm 1}\right)\\
    & \qquad \quad \cdot \bm K\cdot \left(\bm K^{-1}\bm m-\frac{\bm K^{-1}\bm 1\bm 1^\top\bm K^{-1}\bm m}{\bm 1^\top\bm K^{-1}\bm 1}+\frac{\bm K^{-1}\bm 1}{\bm 1^\top\bm K^{-1}\bm 1}\right).
  \end{align*}
  Collecting terms, it follows that
    \begin{align*}
     \MMD(P,P^n)^2
      &= \iint_{\mathcal X\times\mathcal X}k(\xi,\xi^\prime)P(\mathrm d\xi)P(\mathrm d\xi) \\
      &\qquad -(2-1)\bm m^\top \bm K^{-1}\bm m+ (2-1-1)\frac{\bm m^\top \bm K^{-1}\bm 1\bm 1^\top\bm K^{-1}\bm m}{\bm 1^\top\bm K^{-1}\bm 1}- (2-1-1)\frac{\bm 1^\top \bm K^{-1}\bm m}{\bm 1^\top\bm K\bm 1}\\
      &\qquad +\frac{\bm m^\top\bm K^{-1}\bm 1\bm 1^\top\bm K^{-1}\bm 1\bm 1^\top\bm K^{-1}\bm m}{(\bm 1^\top\bm K^{-1}\bm 1)^2}\\
      &\qquad -(1+1)\frac{\bm m^\top\bm K^{-1}\bm 1\bm 1^\top\bm K^{-1}\bm 1}{(\bm 1^\top\bm K^{-1}\bm 1)^2}+ \frac{\bm 1^\top\bm K^{-1}\bm 1}{(\bm 1^\top\bm K^{-1}\bm 1)^2}.
  \end{align*}
  The expression simplifies to
\begin{align}\label{eq:15}
     \MMD(P,P^n)^2
      &= \iint_{\mathcal X\times\mathcal X}k(\xi,\xi^\prime)P(\mathrm d\xi)P(\mathrm d\xi) \\
      &\qquad -\bm m^\top \bm K^{-1}\bm m\nonumber\\
      &\qquad +\frac{\bm m^\top\bm K^{-1}\bm 1\cdot\bm 1^\top\bm K^{-1}\bm m}{\bm 1^\top\bm K^{-1}\bm 1} -2\frac{\bm m^\top\bm K^{-1}\bm 1}{\bm 1^\top\bm K^{-1}\bm 1}+ \frac{1}{\bm 1^\top\bm K^{-1}\bm 1},\nonumber
  \end{align}
  giving the final assertion.
\end{proof}
\begin{remark}[Accumulated costs for involving probability measures]\label{rem:461}
  Comparing the results in Corollaries~\ref{cor:2} and~\ref{cor:412} reveals the costs for insisting on probability measures.
  These costs accumulate to
  \[
    +\frac{{(\bm 1^\top\bm K^{-1}\bm m-1)}^2}{\bm 1^\top\bm K^{-1}\bm 1},\]
  in total.
\end{remark}

\section{Optimal locations}\label{sec:3}
The preceding sections identify the optimal weights (probability weights~$\bm p$, respectively).
This section addresses the optimal facility locations of the quantization problem, that is, the optimal points $\bm x= [x_1,\dots,x_n]$ of the measure $\sum_{i=1}^n \mu_i\,\delta_{x_i}$ or $\sum_{i=1}^n p_i\,\delta_{x_i}$ (respectively).
We reformulate the objective first so that the problem is a pure stochastic optimization problem, that is, an expectation of some specific cost function.
With this, a stochastic gradient method applies.

\subsection{Stochastic optimization}
The following theorem aims at providing a reformulation of the original objective function presented in~\eqref{DMMD} but incorporating the constraint~\eqref{P2}.
The constraints ensure that the resulting measure is a \emph{probability} measure.
By expressing it as a minimization problem involving a cost function that incorporates expectations with respect to random variables $\xi$ and $\xi'$, the problem structure simplifies. This reformulation aids in clearer analysis and practical implementation of the optimization problem.

In what follows, we shall make the dependence on $\bm x= [x_1,\dots,x_n]$ explicit by writing
\begin{equation}\label{eq:16}
	\bm K(\bm x)\coloneqq \bigl(k(x_i,x_j)\bigr)_{i,j=1}^n\ \text{ and }\ k(\xi,\bm x)\coloneqq \bigl(k(\xi,x_1),\dots,k(\xi,x_n)\bigr),
\end{equation}
and\footnote{The subscript in the expectation refers to the random component, $\E_{\xi\sim P}k(\xi,x_i)= \int_{\mathcal X} k(\xi,\xi)\,P(\mathrm d\xi)$.}
\begin{equation}\label{eq:17}
  \bm m(\bm x)\coloneqq \bigl(\E_{\xi\sim P} k(\xi,x_i)\bigr)_{i=1}^n.
\end{equation}
With that, the optimization problem can be reformulated by involving a single expectation.
\begin{theorem}\label{TPS}
  The constrained objective function in~\eqref{P1}--\eqref{P4} can be reformulated as \emph{unconstrained} optimization problem involving the expectation with respect to the product measure $P\otimes P$ by
  \begin{align}\label{eq:18}
    \min_{\bm{x}= [x_1,\dots,x_n]\in \mathcal X^n} \E_{(\xi,\xi')\sim P\otimes P}\bigl[c(\bm{x}, \xi, \xi')\bigr],
  \end{align}
 where
  \begin{align}
    c(\bm x,\xi,\xi^\prime)\coloneqq k(\xi,\xi^\prime)&- k(\xi,\bm x)^\top\bm K(\bm x)^{-1}k(\xi^\prime,\bm x)\nonumber\\
     &+ \frac{1}{\bm 1^\top{\bm K(\bm x)}^{-1}\bm 1}\bigl(1-\bm 1^\top{\bm K(\bm x)}^{-1}k(\xi,\bm x)\bigr)\cdot \bigl(1-\bm 1^\top\bm K(\bm x)^{-1}k(\xi^\prime,\bm x)\bigr) \label{eq:22}
  \end{align}
  is the cost function.
  The expectation in~\eqref{eq:18} is with respect to the probability measure $P\otimes P$, the independent product defined by $(P\otimes Q)(A\times B)\coloneqq P(A)\cdot Q(B)$, and $\bm{x} = [x_1, \ldots, x_n]$ represents the supporting points.
\end{theorem}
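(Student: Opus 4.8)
The plan is to reduce the claim to the optimal-weights computation already carried out. For any fixed configuration $\bm x$, Theorem~\ref{TP} identifies the probability weights minimizing $\|P_k-P_k^n\|_k$, and Corollary~\ref{cor:412} records the resulting optimal value
\[
  \MMD(P,P^n)^2 = \iint_{\mathcal X\times\mathcal X} k(\xi,\xi')\,P(\mathrm d\xi)P(\mathrm d\xi') - \bm m^\top\bm K^{-1}\bm m + \frac{(\bm 1^\top\bm K^{-1}\bm m - 1)^2}{\bm 1^\top\bm K^{-1}\bm 1}.
\]
Because the inner minimization over the weights has been performed, minimizing this expression over $\bm x\in\mathcal X^n$ is precisely the original constrained problem~\eqref{P1}--\eqref{P4} with the constraints~\eqref{P2}--\eqref{P3} already absorbed. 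It therefore suffices to show, for each fixed $\bm x$, that this optimal value equals $\E_{(\xi,\xi')\sim P\otimes P}\bigl[c(\bm x,\xi,\xi')\bigr]$, after which taking the minimum over $\bm x$ yields~\eqref{eq:18}.

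The central observation is that $\bm m=\bm m(\bm x)$ is itself an expectation, $\bm m(\bm x)=\E_{\xi\sim P}[k(\xi,\bm x)]$ by~\eqref{eq:17}, whereas $\bm K(\bm x)$, $\bm 1$, and hence $\bm K^{-1}$ and the scalar $\bm 1^\top\bm K^{-1}\bm 1$ are deterministic in $(\xi,\xi')$. I would rewrite each of the three terms above as a double expectation over the independent product $P\otimes P$, pulling the constant matrices out of the expectation and using independence (tensorization/Fubini) to convert products of single expectations into expectations of products. Concretely, the first term is $\E_{(\xi,\xi')}[k(\xi,\xi')]$; the second becomes
\[
  \bm m^\top\bm K^{-1}\bm m = \E_{\xi}[k(\xi,\bm x)]^\top\bm K^{-1}\,\E_{\xi'}[k(\xi',\bm x)] = \E_{(\xi,\xi')}\bigl[k(\xi,\bm x)^\top\bm K^{-1}k(\xi',\bm x)\bigr];
\]
and for the third I would write $\bm 1^\top\bm K^{-1}\bm m - 1 = -\,\E_\xi\bigl[\,1 - \bm 1^\top\bm K^{-1}k(\xi,\bm x)\,\bigr]$, so that squaring and invoking independence gives
\[
  (\bm 1^\top\bm K^{-1}\bm m - 1)^2 = \E_{(\xi,\xi')}\bigl[\bigl(1-\bm 1^\top\bm K^{-1}k(\xi,\bm x)\bigr)\bigl(1-\bm 1^\top\bm K^{-1}k(\xi',\bm x)\bigr)\bigr].
\]
Dividing by the constant $\bm 1^\top\bm K^{-1}\bm 1$ reproduces exactly the last summand of $c$.

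Assembling the three rewritten terms shows $\MMD(P,P^n)^2=\E_{(\xi,\xi')\sim P\otimes P}\bigl[c(\bm x,\xi,\xi')\bigr]$ with $c$ as in~\eqref{eq:22}, and minimizing over $\bm x\in\mathcal X^n$ then delivers~\eqref{eq:18}. The only point demanding care — the main obstacle, such as it is — is the bookkeeping that lets the deterministic factors $\bm K^{-1}$ and $\bm 1^\top\bm K^{-1}\bm 1$ pass through the expectation, together with the sign tracking in the squared term, where the leading $-1$ disappears upon squaring so that the two independent factors appear symmetrically in $\xi$ and $\xi'$. Everything else is a direct transcription of Corollary~\ref{cor:412}.
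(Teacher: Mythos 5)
Your proposal is correct and follows essentially the same route as the paper: both start from the optimal-value formula of Corollary~\ref{cor:412} (equivalently~\eqref{eq:15}) for fixed $\bm x$ and use independence under the product measure $P\otimes P$ to rewrite $\bm m^\top\bm K^{-1}\bm m$ and the squared term $(\bm 1^\top\bm K^{-1}\bm m-1)^2$ as expectations of the corresponding bilinear expressions in $k(\xi,\bm x)$ and $k(\xi',\bm x)$, yielding the cost function~\eqref{eq:22}. The only cosmetic difference is that the paper matches terms via the expanded form~\eqref{eq:15} and symmetrizes the cross term $2\bm m^\top\bm K^{-1}\bm 1$, whereas you factor the square directly, which is if anything slightly tidier.
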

\begin{proof}
  The assertion is based on the~\eqref{eq:15}. By the properties of the product measure we have that
  \[ \E_{(\xi,\xi^\prime)\sim P\otimes P}\bigl[k(\xi,\bm x)\,{\bm K(\bm x)}^{-1}\,k(\xi^\prime,\bm x)\bigr]= {\bm m(\bm x)}^\top{\bm K(\bm x)}^{-1}\bm m(\bm x)\]
  for $\bm m(\bm x)\coloneqq \E_{\xi\sim P}k(\xi,\bm x)$.
  Further, note that
  \begin{align*}
 2{\bm m(\bm x)}^\top{\bm K(\bm x)}^{-1}\bm 1&= 2\E_{\xi\sim P}\bigl[k(\xi,\bm x){\bm K(\bm x)}^{-1}\bm 1\bigr] \\
       &= \E_{(\xi,\xi^\prime)\sim P\otimes P}\bigl[\bm 1^\top{\bm K(\bm x)}^{-1}k(\xi,\bm x)+ \bm 1^\top{\bm K(\bm x)}^{-1}k(\xi^\prime,\bm x)\bigr],
\end{align*}
  thus the cost function~$c$, as specified.

  We may state the cost function also explicitly by
\begin{align*}
    c(\bm{x}, \xi, \xi') = k(\xi, \xi') &+ \frac{1}{\bm{1}^\top \bm{K}^{-1} \bm{1}}\\
   & - \sum_{j=1}^n \Bigl(\frac{\bm{1}^\top\bm{K}^{-1}}{\bm{1}^\top\bm{K}^{-1}\bm{1}}\Bigr)_{j}\bigl(k(\xi, x_j)+ k(\xi^\prime,x_j)\bigr) \\
    & -  \sum_{i,j=1}^n \,k(\xi, x_i)\biggl( \bm{K}^{-1}_{ij}-\Bigl(\frac{\bm{K}^{-1}\bm{1}\bm{1}^\top\bm{K}^{-1}}{\bm{1}^\top\bm{K}^{-1}\bm{1}}\Bigr)_{ij}\biggr)  \, k(\xi', x_j).
   \end{align*}
   This expression will be used in numerical implementations below.
\end{proof}
\begin{remark}
  The cost function~\eqref{eq:22} is symmetric, it holds that
  \begin{equation}\label{eq:23}
    c(\bm x,\xi,\xi^\prime)= c(\bm x,\xi^\prime,\xi).
  \end{equation}
  This symmetric reformulation turns out to be useful in the numerical procedure detailed in what follows. 
\end{remark}

\subsection{Stochastic gradient}
Stochastic Gradient Descent (SGD) is a powerful optimization algorithm that updates model parameters using noisy estimates of the gradient derived from randomly selected subsets of data, making it especially suitable for large-scale machine learning tasks (cf.\ among many papers, \citet{RobbinsMonro} or \citet{Norkin2020}).
By iteratively refining parameter estimates based on individual data points rather than the entire dataset, SGD not only accelerates convergence but also helps avoid local minima through its inherent randomness.
On the other hand, Stochastic Approximation provides a broader framework for optimizing a sequence of estimates in the presence of noise, enabling effective learning and adaptation in real-time applications.
Together, these methods leverage the stochastic nature of data to facilitate efficient and robust solutions in diverse fields, including statistical estimation, adaptive control, and reinforcement learning.

To find the optimal set of parameters $\bm{x}$ and $\bm\mu$, e.g., we may employ the stochastic gradient method and update the parameters successively. To this end, the optimization problem~\eqref{eq:5} may be reformulated as
\begin{equation}\label{eq:24}
 \min_{\bm x\in \mathcal X^n}\min_{\bm\mu\in {[0,1]}^n}\iint_{\mathcal X\times\mathcal X}k(\xi,\xi^\prime)-\bm\mu^\top\bigl(k(\xi,\bm x)+k(\xi^\prime,\bm x)\bigr)+ \frac{\bm\mu^\top{\bm K(\bm x)}^{-1}\bm\mu}{{\mu(\mathcal X)}^2}\,\mu(\mathrm d\xi)\mu(\mathrm d\xi^\prime),
\end{equation}
which is an unconstrained optimization problem, the objective involving an expectation with respect to the product measure $\mu\otimes\mu$.

The inner optimization with respect to~$\bm\mu$ has been solved explicitly in Corollary~\ref{cor:2}. By involving~\eqref{eq:7} explicitly, the inner optimization problem in~\eqref{eq:24} has an explicit solution, and the outer optimization simplifies to
\[ \min_{\bm x\in \mathcal X^n}\iint_{\mathcal X\times\mathcal X}k(\xi,\xi^\prime)- k(\xi,\bm x)^\top\bm K(\bm x)^{-1}k(\xi^\prime,\bm x)\,\mu(\mathrm d\xi)\mu(\mathrm d\xi^\prime),\]
which is on the location parameters $\bm x\in\mathcal X^n$ only, and thus a significant simplification over~\eqref{eq:24}.

For the quantization of probability measures, the cost function is given in~\eqref{eq:22}, the optimization problem is~\eqref{eq:18}.

Algorithm~\ref{alg:SGD} details the stochastic gradient method for the problem~\eqref{eq:18} in Theorem~\ref{TPS} to identify the discrete measure approximating a probability measure in the smallest maximum mean discrepancy distance.

\begin{remark}[Computational expenses to evaluate the cost function]
  Algorithm~\ref{alg:SGD} requires evaluating the cost function~\eqref{eq:25}. 
  Note that computing the cost function via~\eqref{eq:22} requires solving the systems
  \begin{equation}\label{eq:28}
        \bm K(\bm x)\bm w= k(\xi,\bm x)\text{ and }\bm K(\bm x)\bm w^\prime= k(\xi^\prime,\bm x)
  \end{equation}
  for $\bm w$ and $\bm w^\prime$ in~\eqref{eq:22}.
  This is the most expensive step in Algorithm~\ref{alg:SGD}.

  However, these costs can be reduced.
  Indeed, consider the cost function
  \begin{align}
    c^\prime(\bm x,\xi,\xi^\prime)\coloneqq k(\xi,\xi^\prime)&- k(\xi^\prime,\bm x)^\top\bm K(\bm x)^{-1}k(\xi,\bm x)\nonumber\\
     &+ \frac{\bigl(1-\bm 1^\top\bm K(\bm x)^{-1}k(\xi,\bm x)\bigr)^2}{\bm 1^\top\bm K(\bm x)^{-1}\bm 1}.\label{eq:27}
  \end{align}
  As the expectation is with respect to the independent product measure $P\otimes P$ in~\eqref{eq:18}, it holds that
  \[ \E_{(\xi,\xi^\prime)\sim P\otimes P}\bigl[c(\bm x,\xi,\xi^\prime)\bigr]= \E_{(\xi,\xi^\prime)\sim P\otimes P}\bigl[c^\prime(\bm x,\xi,\xi^\prime)\bigr].\]
  In evaluating $c^\prime$, only the first system in~\eqref{eq:28} needs to be solved, thus reducing the total expenses by $\nicefrac12$ and accelerating the algorithm.
  In contrast to~$c$, the cost function $c^\prime$ is not symmetric any longer (cf.~\eqref{eq:23}).
\end{remark}

\begin{algorithm}[t]
\caption{Stochastic gradient descent for quantization of a probability measure~$P$\label{alg:SGD}}
\begin{algorithmic}[1]
\STATE \textbf{input:} probability measure~$P$, initial support points $\bm x_0$, kernel function $k(\cdot, \cdot)$
\STATE \textbf{output:} quantization measure $P^n= \sum_{i=1}^n p_i\,\delta_{x_i}$ with $n$ optimal locations $\bm x^*=[x_1,\dots,x_n]$ and weights~$\bm p= [p_1,\dots,p_n]$, and the maximum mean discrepancy $\MMD(P,\, P^n)$

\STATE initialize $\bm x \leftarrow \bm x_0$, $\bm m\leftarrow \bm 0$, $t\leftarrow 0$ \hfill\COMMENT{initialize the support points, the mean vector and the iteration count~$t$}

\REPEAT
    \STATE generate two random, independent samples $\xi$ and $\xi^\prime$ from $P$
    
    \STATE update the average maximum mean discrepancy with cost function~\eqref{eq:22}, \hfill\COMMENT{cf.\ Corollary~\ref{cor:412}}
    \begin{equation}\label{eq:25}
      \MMD^2\leftarrow \frac1{t+1}\bigl(t\cdot \MMD^2+ c(\bm x, \xi, \xi^\prime, \bm m)\bigr)
    \end{equation}
    \STATE compute the gradient of the cost function employed in~\eqref{eq:25},
    \begin{equation}\label{eq:26}
        \nabla c \leftarrow \nabla_{\bm x}\, c(\bm x, \xi, \xi^\prime, \bm m)
    \end{equation}
    \STATE $\bm x \leftarrow \bm x - \eta_t \cdot \nabla c$ with learning rate $\eta_t \leftarrow \frac{1.0}{100 + t}$ \hfill\COMMENT{update the support points. \\
    \hfill The learning rate $\eta_t$ satisfies $\sum_{t>0}\eta_t=\infty$, and $\sum_{t>0}\eta_t^2<\infty$.}
    \STATE $ \bm m \leftarrow \frac1{t+1}\bigl(t\cdot \bm m + \frac12\bigl(k(\xi,\bm x)+k(\xi^\prime,\bm x)\bigr)\bigr)$ \hfill\COMMENT{cf.~\eqref{eq:17}}
    \STATE $t\leftarrow t+1$ \hfill\COMMENT{update the iteration count}
\UNTIL desired approximation precision reached

\STATE compute the optimal weights $\bm p$ with~\eqref{eq:8}

\STATE \textbf{return} $(\sum_{i=1}^n p_i\,\delta_{x_i},\ \MMD)$\qquad\qquad\hfill\COMMENT{return the quantization measure with optimal locations and the corresponding maximum mean discrepancy}
\end{algorithmic}
\end{algorithm}
\subsection{Non-negativity constraints}\label{sec:6}
The results in the preceding section on the quantization of probability measures address all constraints in~\eqref{eq:4}, except the non-negativity constraint~\eqref{P3}. Figure~\ref{fig:b22} below (page~\pageref{fig:b22}) demonstrates that these non-negativity constraints are occasionally necessary.
For fixed quantizers $\bm x= [x_1,\dots,x_n]$, these constraints are easy to handle, simply by removing~$x_j$ from the list $\bm x$ for every index~$j$ with $p_j<0$, that is to activate the non-negativity constraint~\eqref{P3}.

Both, the simplex constraints~\eqref{P2} and the non-negativity constraints~\eqref{P3} can be incorporated in a single stochastic optimization reformulation as in~\eqref{eq:18}. To this end consider the simplex
\[ \mathcal S\coloneqq \{(p_1,\dots,p_n)\colon p_1\ge0,\dots, p_n\ge 0\text{ and }p_1+\dots + p_n=1\},\]
and the projection
\begin{equation}\label{eq:33}
	\pi(\bm\mu)\coloneqq \argmin_{\bm p\in\mathcal S}\|\bm\mu-\bm p\|\quad (\in\mathcal S).
\end{equation}
The crucial property of $\pi$ employed below is that $\pi(\bm\mu)= \bm\mu$ for $\bm\mu\in\mathcal S$.
With that, the complete constrained optimization problem~\eqref{P1}--\eqref{P4} can be recast as global, unconstrained optimization problem of the form
\begin{equation}\label{eq:31}
	\min_{(\bm x,\bm\mu) \in \mathcal X^n\times  \mathbb R^n} \E_{(\xi,\xi^\prime)\sim P\otimes P}\bigl[c^{\prime\prime}(\bm x,\pi(\bm\mu),\xi,\xi^\prime)\bigr]+ \|\bm\mu-\pi(\bm\mu)\|,
\end{equation}
where
\begin{align}
  c^{\prime\prime}(\bm x,\bm\mu,\xi,\xi^\prime)& \coloneqq k(\xi,\xi^\prime)- \bm\mu^\top k(\bm x,\xi)- \bm\mu^\top k(\bm x,\xi^\prime)+\bm\mu^\top\bm K(\bm x)\,\bm\mu\label{eq:32}.
\end{align}
The extra term in~\eqref{eq:31} is a penalty if $\mu\notin \mathcal S$, while $\pi(\bm\mu)$ in~\eqref{eq:32} ensures the constraints~\eqref{P2} and~\eqref{P3} simultaneously. This modification thus guarantees the optimal probability to be in the simplex, $\pi(\bm\mu)\in \mathcal S$.

The problem~\eqref{eq:31}, however, is an optimization problem on $\mathcal X^n\times \mathbb R^n$, and does not take advantage of the optimal weights directly derived in~\eqref{eq:8}.
We refer to~\citet{Norkin2025} (cf.\ also \citet{NorkinPichler}) for the optimization problem~\eqref{eq:31} with constraints governed by a function as in~\eqref{eq:33}.
\bigskip

To take advantage of the optimal probabilities~\eqref{eq:8} and to automate active and inactive non-negativity constraints for optimization on $\bm p$ and $\bm x$, let
\[
  J\coloneqq \{j\colon p_j\geq 0 \} \text { and } J^\mathsf{c}= \{ j\colon p_j < 0 \}\]
be the sets of indices corresponding to non-negative and negative probabilities, respectively.

We define a modified Lagrangian function to optimize probabilities while ensuring non-negative weights. The following theorem presents the necessary conditions for optimality derived from this formulation.

\begin{theorem}
  The solution of the optimization problem~\eqref{eq:4} with Lagrangian function
\begin{align*}
L(p_1, \ldots, p_n, \lambda, \mu_1, \ldots, \mu_m) \coloneqq \| P_k- P_k^n \|_k^2 + \lambda \left(\sum_{i=1}^n p_i- 1\right) + \sum_{i \in J^\mathsf{c}} \mu_i\,p_i
\end{align*}
  with $m\coloneqq \#J^\mathsf{c}$, the number of active non-negativity constraints,
  is given by
\begin{align*}
\begin{pmatrix}
\bm{K}_{J, J}  &\bm{1}_{J} & 0\\
\bm{K}_{J^\mathsf{c}, J}  &\bm{1}_{J^\mathsf{c}} & I_{J^\mathsf{c}}\\
\bm{1}_{J}^\top &0&0
\end{pmatrix}
\begin{pmatrix}
\bm{p}_{J} \\
\lambda/2\\
\mu/2
\end{pmatrix}=
\begin{pmatrix}
\bm{m}_{J}\\
\bm{m}_{J^\mathsf{c}}\\
1 
\end{pmatrix}
\end{align*}
  and $\bm p_{J^\mathsf{c}}= \bm 0$.

  Here, $\bm{1}_{J}$ and $\bm{1}_{J^\mathsf{c}}$ are column vectors of ones with dimensions equal to the sizes of the sets $J$ and $J^\mathsf{c}$, respectively, the symbols 0 and 1 represent scalars, and $I_{J^\mathsf{c}}$ is the identity matrix of size equal to the dimension of $J^\mathsf{c}$.
\end{theorem}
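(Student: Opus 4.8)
The plan is to mirror the derivation in the proof of Theorem~\ref{TP}, now carrying the additional multipliers $\mu_i$ attached to the activated non-negativity constraints. First I would record that activating the constraint for an index $i\in J^{\mathsf c}$ amounts to fixing $p_i=0$, so that $\bm p_{J^{\mathsf c}}=\bm 0$ from the outset; consequently every kernel sum $\sum_{i=1}^n k(x_i,x_\ell)\,p_i$ collapses to $\sum_{i\in J}k(x_i,x_\ell)\,p_i$, involving only the entries of $\bm p_J$.

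Next I would compute the stationarity conditions of the stated Lagrangian. Differentiating in $p_\ell$ and expanding $\|P_k-P_k^n\|_k^2$ via~\eqref{DMMD} exactly as in~\eqref{eq:10} gives, for each index $\ell$,
\[
  0= -2\,m_\ell + 2\!\sum_{i\in J}k(x_i,x_\ell)\,p_i + \lambda + \one_{\{\ell\in J^{\mathsf c}\}}\,\mu_\ell,
\]
with $m_\ell=\int_{\mathcal X}k(\xi,x_\ell)\,P(\mathrm d\xi)$ as in~\eqref{eq:9}; the multiplier term appears precisely when $\ell\in J^{\mathsf c}$, since only those $p_i$ enter the penalty $\sum_{i\in J^{\mathsf c}}\mu_i\,p_i$. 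Differentiating in $\lambda$ reproduces the simplex constraint, which because of $\bm p_{J^{\mathsf c}}=\bm 0$ reads $\bm 1_J^\top\bm p_J=1$.

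I would then split the $n$ stationarity equations according to whether $\ell\in J$ or $\ell\in J^{\mathsf c}$ and divide each by~$2$. The $J$-block carries no multiplier and yields $\bm K_{J,J}\,\bm p_J+\bm 1_J(\lambda/2)=\bm m_J$; the $J^{\mathsf c}$-block carries the extra term and yields $\bm K_{J^{\mathsf c},J}\,\bm p_J+\bm 1_{J^{\mathsf c}}(\lambda/2)+I_{J^{\mathsf c}}(\mu/2)=\bm m_{J^{\mathsf c}}$, the identity matrix arising because each $\mu_\ell$ couples only to its own equation. Stacking these two vector equations together with $\bm 1_J^\top\bm p_J=1$ produces exactly the asserted block system in the unknowns $(\bm p_J,\lambda/2,\mu/2)$.

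The step requiring the most care is the bookkeeping of the index sets: one must confirm that after imposing $\bm p_{J^{\mathsf c}}=\bm 0$ it is the coupling block $\bm K_{J^{\mathsf c},J}$ (rather than $\bm K_{J^{\mathsf c},J^{\mathsf c}}$) that survives in the second row. The remaining subtlety is conceptual rather than computational: the equations above are only the first-order (KKT) conditions for the fixed active set $J^{\mathsf c}$, so the result characterizes a stationary point of the active-set subproblem; a fully rigorous treatment would additionally verify dual feasibility, namely that the recovered weights satisfy $\bm p_J\ge\bm 0$ and the multipliers $\mu_\ell$ have the sign that justifies the active-set choice a posteriori.
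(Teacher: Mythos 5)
Your proposal is correct and takes essentially the same approach as the paper: differentiate the stated Lagrangian, split the resulting stationarity conditions according to $\ell\in J$ versus $\ell\in J^{\mathsf c}$, and assemble them with the simplex constraint into the asserted block system. The only differences are cosmetic — the paper retains $\bm p_{J^{\mathsf c}}$ as an unknown in a larger intermediate system (with the complementary-slackness row $I_{J^{\mathsf c}}\,\bm p_{J^{\mathsf c}}=\bm 0$) and also re-derives $\lambda$ by the multiply-by-$p_\ell$-and-sum device of Theorem~\ref{TP} before reducing to the stated form, whereas you impose $\bm p_{J^{\mathsf c}}=\bm 0$ from the outset; your closing caveat about dual feasibility of the active set is a point the paper likewise leaves unverified.
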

In the preceding theorem, $ \lambda $ is the Lagrange multiplier enforcing the equality constraint $\sum_{i=1}^n p_i = 1 $, while $ \mu_i $ is the Lagrange multiplier for the inequality constraints, where $\mu_i= 0$ for each $i \in J $ and $\mu_i > 0 $ for each $i \in J^\mathsf{c}$.
The term $ \sum_{i \in J^\mathsf{c}} \mu_i\,p_i$ introduces a penalty for negative probabilities, thereby discouraging $ p_i $ from being negative and effectively enforcing the condition $p_i\geq 0$ during optimization. Thus, the formulation ensures that the optimization process respects the constraints on the probabilities while minimizing the Lagrangian.
\begin{proof}
  We first derive the necessary conditions for optimality by computing the derivatives of the Lagrangian with respect to
  $\lambda$, $p_\ell$ and $\mu_\ell$ (for $\ell \in J^\mathsf{c}$):
  \begin{itemize}
    \item The derivative with respect to $\lambda$ is
 \begin{align*}
    \frac{\partial L}{\partial \lambda} = \sum_{i=1}^n p_i - 1 = 0.
  \end{align*}
    \item The derivative with respect to $\mu_\ell$ (complementary slackness) if $\ell \in J^\mathsf{c}$ is given by
\begin{align*}
    \frac{\partial L}{\partial \mu_\ell} = p_\ell.
\end{align*}
	\item Derivative with respect to $p_\ell$: if $\ell \in J^\mathsf{c}$,
 \begin{align}\label{Jc}
    \frac{\partial L}{\partial p_\ell} = -2\int_{\mathcal X} k(x,x_\ell) \, \mu(\mathrm{d}x) + 2 \sum_{i=1}^n k(x_i,x_\ell) p_i + \lambda + \mu_\ell = 0,
\end{align}
		if ${\ell \in J}$,
\begin{align}\label{J}
    \frac{\partial L}{\partial p_\ell} = -2\int_{\mathcal X} k(x,x_\ell) \, \mu(\mathrm{d}x) + 2 \sum_{i=1}^n k(x_i,x_\ell) p_i + \lambda = 0.
\end{align}
   \end{itemize}

  By multiplying each term of equation~\eqref{Jc} by $p_\ell$ that ${\ell \in J^\mathsf{c}}$ and summing this equation over ${\ell \in J^\mathsf{c}}$, we get
  \begin{align}\label{JC1}
 \lambda \sum_{\ell \in J^\mathsf{c}}p_\ell = 2 \sum_{\ell \in J^\mathsf{c}} p_\ell  \int_{\mathcal X} k(x,x_\ell) \, \mu(\mathrm{d}x)  - 2\sum_{\ell \in J^\mathsf{c}} \sum_{i=1}^n k(x_i,x_\ell) p_i p_\ell-\sum_{\ell \in J^\mathsf{c}}p_\ell \mu_\ell.
 \end{align}  
  Moreover, multiplying each term of equation~\eqref{J} by $p_\ell$, and summing this equation over ${\ell \in J}$, we get
\begin{align}\label{J1}
 \lambda \sum_{\ell \in J}p_\ell = 2 \sum_{\ell \in J} p_\ell  \int_\mathcal{X} k(x,x_\ell) \, \mu(\mathrm{d}x)  - 2\sum_{\ell \in J} \sum_{i=1}^n k(x_i,x_\ell) p_i p_\ell.
 \end{align}  
  By adding equations~\eqref{JC1} and~\eqref{J1}, we obtain
  \begin{align*}
 \nonumber \lambda \sum_{\ell \in J^\mathsf{c}}p_\ell&+\lambda \sum_{\ell \in J}p_\ell=
 \nonumber\\=& 2 \sum_{\ell \in J^\mathsf{c}} p_\ell  \int_\mathcal{X} k(x,x_\ell) \, \mu(\mathrm{d}x)  - 2\sum_{\ell \in J^\mathsf{c}} \sum_{i=1}^n k(x_i,x_\ell) p_i p_\ell-\sum_{\ell \in J^\mathsf{c}}p_\ell \mu_\ell
 \nonumber\\+& 2 \sum_{\ell \in J} p_\ell  \int_{\mathcal X} k(x,x_\ell) \, \mu(\mathrm{d}x)  - 2\sum_{\ell \in J} \sum_{i=1}^n k(x_i,x_\ell) p_i p_\ell.
 \end{align*}    
  Thus, we obtain
\begin{align*}
  \lambda \Bigl( & \sum_{\ell \in J^\mathsf{c}}p_\ell+ \sum_{\ell \in J}p_\ell \Bigr) 
    = 2 \sum_{\ell \in J^\mathsf{c}} p_\ell  \int_{\mathcal X} k(x,x_\ell) \, \mu(\mathrm{d}x) +2 \sum_{\ell \in J} p_\ell  \int_{\mathcal X} k(x,x_\ell) \,\mu(\mathrm{d}x) \\
   &\ -2 \left( 
 \sum_{\ell \in J^\mathsf{c}} \sum_{i=1}^n k(x_i,x_\ell) p_i p_\ell +
 \sum_{\ell \in J} \sum_{i=1}^n k(x_i,x_\ell) p_i p_\ell\right) 
 - \sum_{\ell \in J^\mathsf{c}}p_\ell \mu_\ell
 \\&= 2 \sum_{\ell \in J^\mathsf{c}} p_\ell  \int_{\mathcal X} k(x,x_\ell) \, \mu(\mathrm{d}x) +2 \sum_{\ell \in J} p_\ell  \int_{\mathcal X} k(x,x_\ell) \, \mu(\mathrm{d}x)
  \\&\ -2 \left( 
 \sum_{\ell \in J^\mathsf{c}} \sum_{i \in J^\mathsf{c}} k(x_i,x_\ell) p_i p_\ell +\sum_{\ell \in J^\mathsf{c}} \sum_{i \in J} k(x_i,x_\ell) p_i p_\ell +
 \sum_{\ell \in J} \sum_{i \in J} k(x_i,x_\ell) p_i p_\ell+\sum_{\ell \in J} \sum_{i \in J^\mathsf{c}} k(x_i,x_\ell) p_i p_\ell\right)
  \\&\ - \sum_{\ell \in J^\mathsf{c}}p_\ell \mu_\ell.
 \end{align*}            
  Therefore, employing the compact notation,
  \begin{align}\label{Total}
 \nonumber \lambda \Bigl(\bm{p}_{J^\mathsf{c}}^\top\bm{1}_{J^\mathsf{c}}&+ \bm{p}_{J}^\top\bm{1}_J \Bigr) =\lambda \left( \sum_{i=1}^n p_i \right) 
 = 2 \left( \bm{p}_{J^\mathsf{c}}^\top \bm{m}_{J^\mathsf{c}}+\bm{p}_{J}^\top \bm{m}_{J}\right) 
 \nonumber\\-& 2 \left( 
 \bm{p}_{J^\mathsf{c}}^\top\,\bm{K}_{J^\mathsf{c}, J^\mathsf{c}}\,\bm{p}_{J^\mathsf{c}}+\bm{p}_{J^\mathsf{c}}^\top\,\bm{K}_{J^\mathsf{c}, J}\,\bm{p}_{J} +
 \bm{p}_{J}^\top\,\bm{K}_{J, J}\,\bm{p}_J+\bm{p}_J^\top\,\bm{K}_{J, J^\mathsf{c}}\,\bm{p}_{J^\mathsf{c}}\right) 
 - \bm{p}_{J^\mathsf{c}}^\top \mu.
 \end{align}       
  Since $\sum_{i=1}^n p_i = 1$, we can derive from equation~\eqref{Total} that
  \begin{align*}
  \lambda &= 2 \left( \bm{p}_{J^\mathsf{c}}^\top \bm{m}_{J^\mathsf{c}}+\bm{p}_{J}^\top \bm{m}_{J}\right) \\
 &\ -
 2 \left( 
 \bm{p}_{J^\mathsf{c}}^\top\,\bm{K}_{J^\mathsf{c}, J^\mathsf{c}}\,\bm{p}_{J^\mathsf{c}}+\bm{p}_{J^\mathsf{c}}^\top\,\bm{K}_{J^\mathsf{c}, J}\,\bm{p}_{J} +
 \bm{p}_{J}^\top\,\bm{K}_{J, J}\,\bm{p}_{J}+\bm{p}_{J}^\top\,\bm{K}_{J, J^\mathsf{c}}\,\bm{p}_{J^\mathsf{c}}\right) 
 - \bm{p}_{J^\mathsf{c}}^\top \mu.
 \end{align*}       
  From equations~\eqref{JC1} and~\eqref{J1}, we have 
  \begin{align*}
 \lambda \bm{p}_{J^\mathsf{c}}^\top\bm{1}_{J^\mathsf{c}}= 2 \bm{p}_{J^\mathsf{c}}^\top \bm{m}_{J^\mathsf{c}}  - 2\bm{p}_{J^\mathsf{c}}^\top \bm{K}_{J^\mathsf{c}, J^\mathsf{c}}\bm{p}_{J^\mathsf{c}} -2\bm{p}_{J^\mathsf{c}}^\top \bm{K}_{J^\mathsf{c}, J}\bm{p}_{J} -\bm{p}_{J^\mathsf{c}}^\top \mu,
 \end{align*}  
  and  
  \begin{align*}
 \lambda \bm{p}_{J}^\top\bm{1}_{J}= 2 \bm{p}_{J}^\top \bm{m}_{J}  - 2\bm{p}_{J}^\top \bm{K}_{J, J^\mathsf{c}}\bm{p}_{J^\mathsf{c}} -2\bm{p}_{J}^\top \bm{K}_{J, J}\bm{p}_{J},
 \end{align*}  
  respectively.
  Thus, \begin{align}\label{First}
 \lambda \bm{1}_{J^\mathsf{c}}= 2 \bm{m}_{J^\mathsf{c}}  - 2\bm{K}_{J^\mathsf{c}, J^\mathsf{c}}\bm{p}_{J^\mathsf{c}} -2 \bm{K}_{J^\mathsf{c}, J}\bm{p}_{J} -  I_{J^\mathsf{c}}\mu,
 \end{align}  
  and \begin{align}\label{Second}
 \lambda \bm{1}_{J}= 2  \bm{m}_{J}  - 2 \bm{K}_{J, J^\mathsf{c}}\bm{p}_{J^\mathsf{c}} -2 \bm{K}_{J, J}\bm{p}_{J},
 \end{align}  
  Considering equations~\eqref{First},~\eqref{Second}, and under the condition that $\bm{p}_{J^\mathsf{c}}=0$, we obtain: 
 \begin{align*}
 \begin{pmatrix}
 \bm{K}_{J, J} &\bm{K}_{J, J^\mathsf{c}} &\bm{1}_{J} & 0\\
 \bm{K}_{J^\mathsf{c}, J} &\bm{K}_{J^\mathsf{c}, J^\mathsf{c}} &\bm{1}_{J^\mathsf{c}} & I_{J^\mathsf{c}}\\
 \bm{1}_{J}^\top & \bm{1}_{J^\mathsf{c}}^\top&0&0\\
 0&  I_{J^\mathsf{c}} &0&0
 \end{pmatrix}
 \begin{pmatrix}
 \bm{p}_{J} \\
 \bm{p}_{J^\mathsf{c}} \\
 \lambda/2\\
 \mu/2\\
 \end{pmatrix}=
 \begin{pmatrix}
 \bm{m}_{J}\\
 \bm{m}_{J^\mathsf{c}}\\
 1 \\
 0
 \end{pmatrix}.
 \end{align*}                   
  The assertion of the theorem reformulates the latter display.
\end{proof}

\subsection{Quantization of the normal distribution}
This section provides a deterministic solution for optimizing the MMD squared objective using a normal kernel, eliminating the need for stochastic approximation.
By deriving closed-form expressions and employing a modified Lagrangian, we ensure the normalization constraint. The optimality conditions lead to a set of equations for efficient computation, offering a precise solution with broader applications in kernel methods and optimization theory.

In what follows, consider the normal distribution
\[ P\sim \mathcal N(\mu,\sigma^2) \]
together with the kernel
\[	k(x,y)\coloneqq k_\ell(|y-x|),
          \]
where
\begin{equation}\label{eq:21}
	k_\ell(z)\coloneqq \frac1{\sqrt{2\pi\ell^2}}e^{-\frac{z^2}{2\ell^2}}
\end{equation}
has differing bandwidth~$\ell$.
This objective function involves a specific integral related to the normal kernel. To find deterministic solutions rather than relying on stochastic methods, we derive a closed-form expression for this integral, leading to a concrete optimization problem.

The embedding of the measure $P$ in $\mathcal H_k$ (cf.~\eqref{eq:1}) is found by the convolution theorem for normally distributed random variables, it is given explicitly by the function
\[ P_k(x)= \int_{\mathbb R} k(x,y)P(\mathrm{d}y)= \frac1{\sqrt{2\pi(\sigma^2+\ell^2)}}e^{-\frac{{(x-\mu)}^2}{2(\sigma^2+\ell^2)}}\in \mathcal H_k.\]
The approximating measure thus has the embedding
\[ P^n_k(x)= \sum_{i=1}^n p_i\cdot \frac1{\sqrt{2\pi\ell^2}}e^{\frac{{(x-x_i)}^2}{2\ell^2}}\in \mathcal H_k.\]
In the same way, the vector $\bm{m}$ is given explicitly via~\eqref{eq:9} by the entries
\[ m_i= P_k(x_i)= \E_{\xi\sim P}k(x_i,\xi)=\frac1{\sqrt{2\pi(\sigma^2+\ell^2)}}e^{-\frac{{(x_i-\mu)}^2}{2(\sigma^2+\ell^2)}},\quad i=1,\dots,n,\]
that is, $m_i= P_k(x_i)$.
With that, the approximation quality is available explicitly using~\eqref{eq:7} by
\begin{align*}
  \MMD{(P,\mu^n)}^2&= \|P_k- \mu_k^n\|^2\\
             &= \frac1{\sqrt{2\pi(2\sigma^2+\ell^2)}}- \frac1{2\pi(\sigma^2+\ell^2)}\sum_{i,j=1}^n e^{-\frac{(x_i-\mu)^2+(x_j-\mu)^2}{2(\sigma^2+\ell^2)}}\bm K(\bm x)^{-1}_{ij},
\end{align*}
and the optimal weights are $\mu= {\bm{K}(\bm x)}^{-1}\,\bm{m}(\bm x)$ by Proposition~\ref{prop:180}.

Deterministic optimization methods can handled this objective, and stochastic gradient methods are not needed any longer for this special case.

The method extends from~$\mathbb R$ to normal distributions on $\mathbb R^d$ analoguously.
\begin{remark}
    Note as well that
    \[	k^\prime(z)= -\frac{z}{\ell^2}k(z) \text{ and } P^\prime_k(x)= -\frac{x-\mu}{\ell^2+\sigma^2} P_k(x)\]
    so that computing the gradient in~\eqref{eq:26} (Algorithm~\ref{alg:SGD}) simplifies significantly by applying Stein’s lemma.
\end{remark}

\section{Numerical illustration}\label{sec:Results}
The optimization results validate the effectiveness of our approach in achieving both accurate and efficient solutions.
This section details our findings, demonstrating how the Algorithm~\ref{alg:SGD} performs in terms of convergence behavior, solution accuracy, and computational efficiency. Additionally, the results confirm that our methods are well-suited for practical applications, offering robust performance across a range of optimization problems.

The following computations are executed by employing the Matérn covariance function
\[ k(x,y)\coloneqq C_{\ell,\nu}(|y-x|),\]
where
\begin{equation}\label{eq:29}
  C_{\ell,\nu}(d)\coloneqq \frac{2^{-\nu}\sqrt{2\nu}}{\ell\sqrt\pi \Gamma\bigl(\nu+\frac12\bigr)}\cdot\left(\sqrt{2\nu}\frac{d}{\ell}\right)^\nu  K_\nu\biggl(\sqrt{2\nu}\frac{d}\ell\biggr).
\end{equation}
The function~$K_\nu$ is the modified Bessel function of the second kind.
The kernel function~$k$ provides a positive definite kernel with bandwidth parameter~$\ell$, while~$\nu$ is the smoothness parameter.
For $\nu=\infty$, the kernel is the normal kernel (cf.~\eqref{eq:21}), while for $\nu= \nicefrac12$, the kernel is the density of the (symmetric) exponential distribution.
The constant in the kernel~\eqref{eq:29} is chosen so that $\int_{-\infty}^\infty k(x,y)\,\mathrm{d}y=1$ for every $x\in \mathbb R$.

Figure~\ref{fig:ab} displays the quantification of the normal distribution, Table~\ref{tab:1} provides the corresponding approximation quality, measuring the maximum mean discrepancy.
Figure~\ref{fig:ab2} and Table~\ref{tab:uniform_distribution_results} display the results for the uniform distribution, and Figure~\ref{fig:ab3} and Table~\ref{tab:exponential_distribution_results} provide the results for the exponential distribution.

\begin{figure}[ht]
  \centering
  \subfloat[][bandwidth $\ell=0.1$, smoothness $\nu= \nicefrac12$]
  {\includegraphics[width=0.49\textwidth]{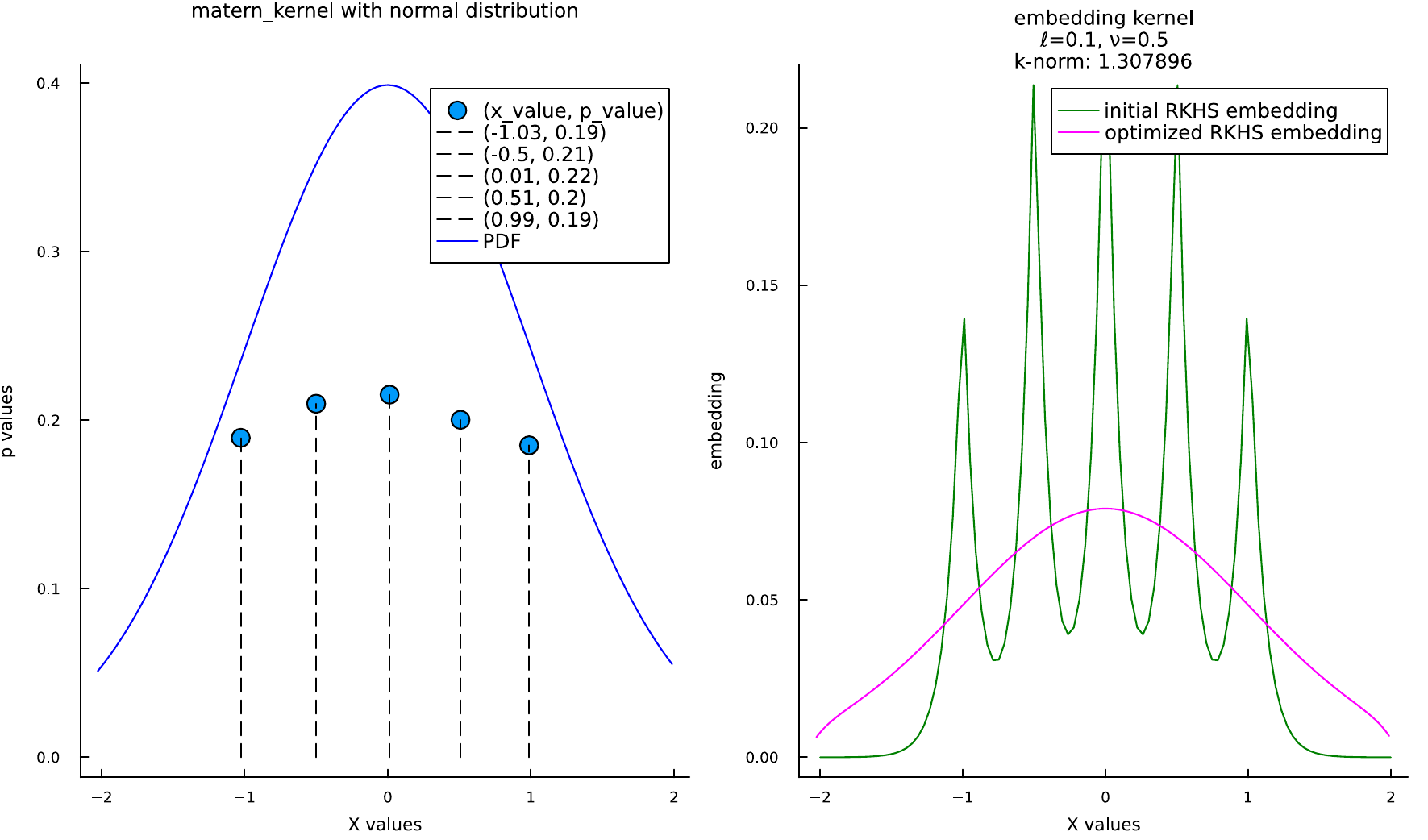}\label{fig:a}}
  \hfill
  \subfloat[][$\ell= 0.1$, $\nu= \infty$]
  {\includegraphics[width=0.49\textwidth]{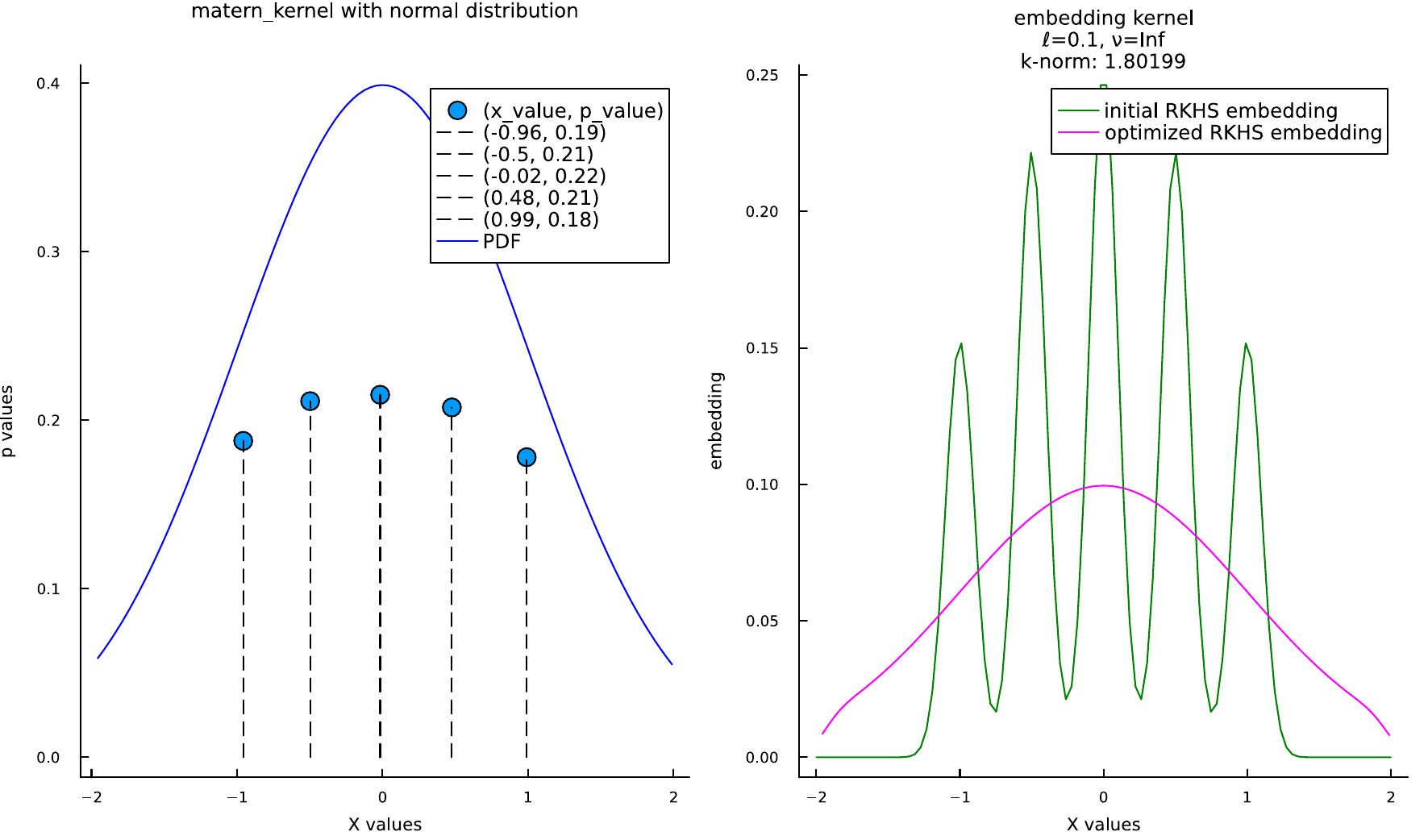}\label{fig:b}}\\
  \subfloat[][bandwidth $\ell=0.5$, smoothness $\nu= \nicefrac12$]
  {\includegraphics[width=0.49\textwidth]{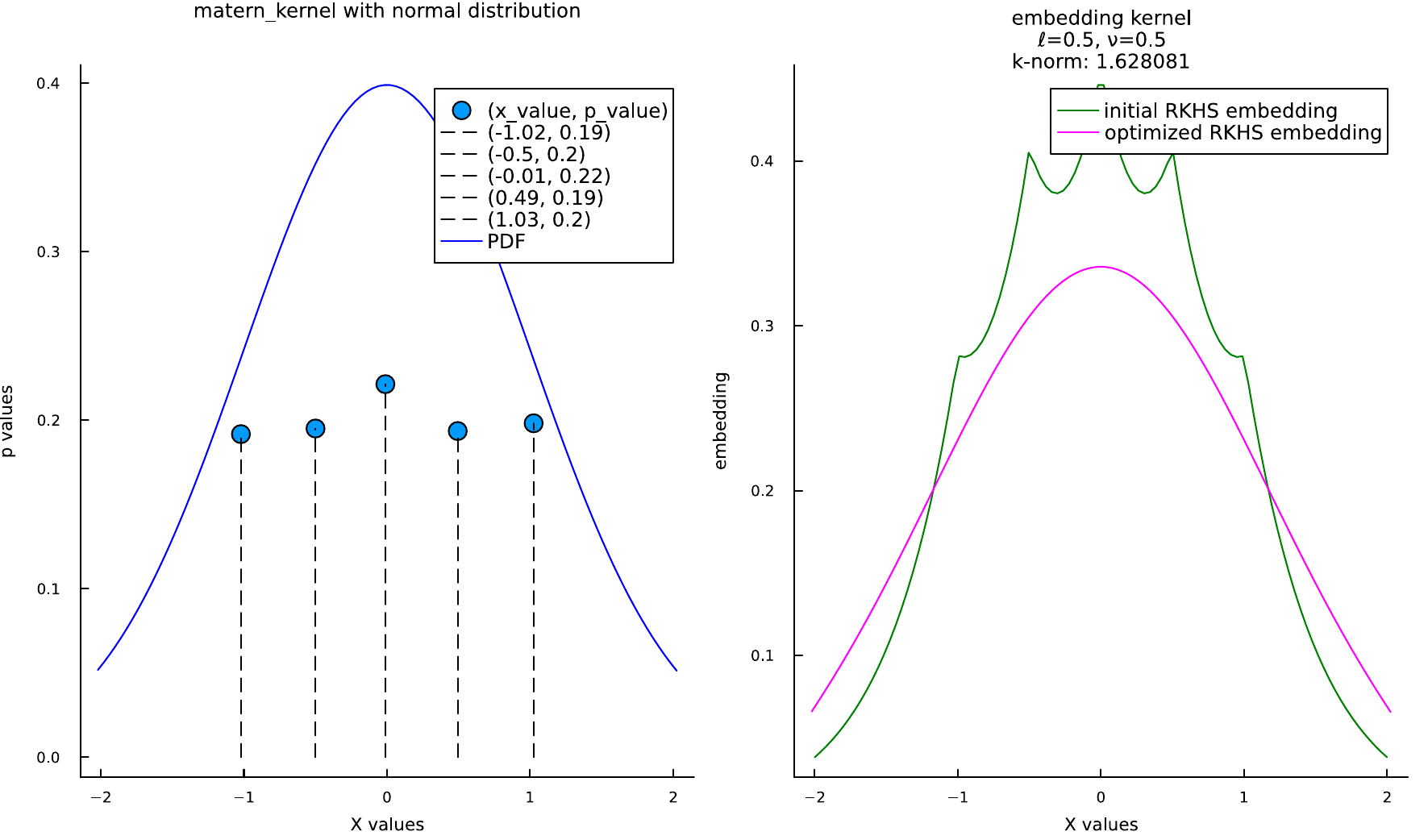}\label{fig:a1}}
  \hfill
  \subfloat[][$\ell= 0.5$, $\nu= \infty$]
  {\includegraphics[width=0.49\textwidth]{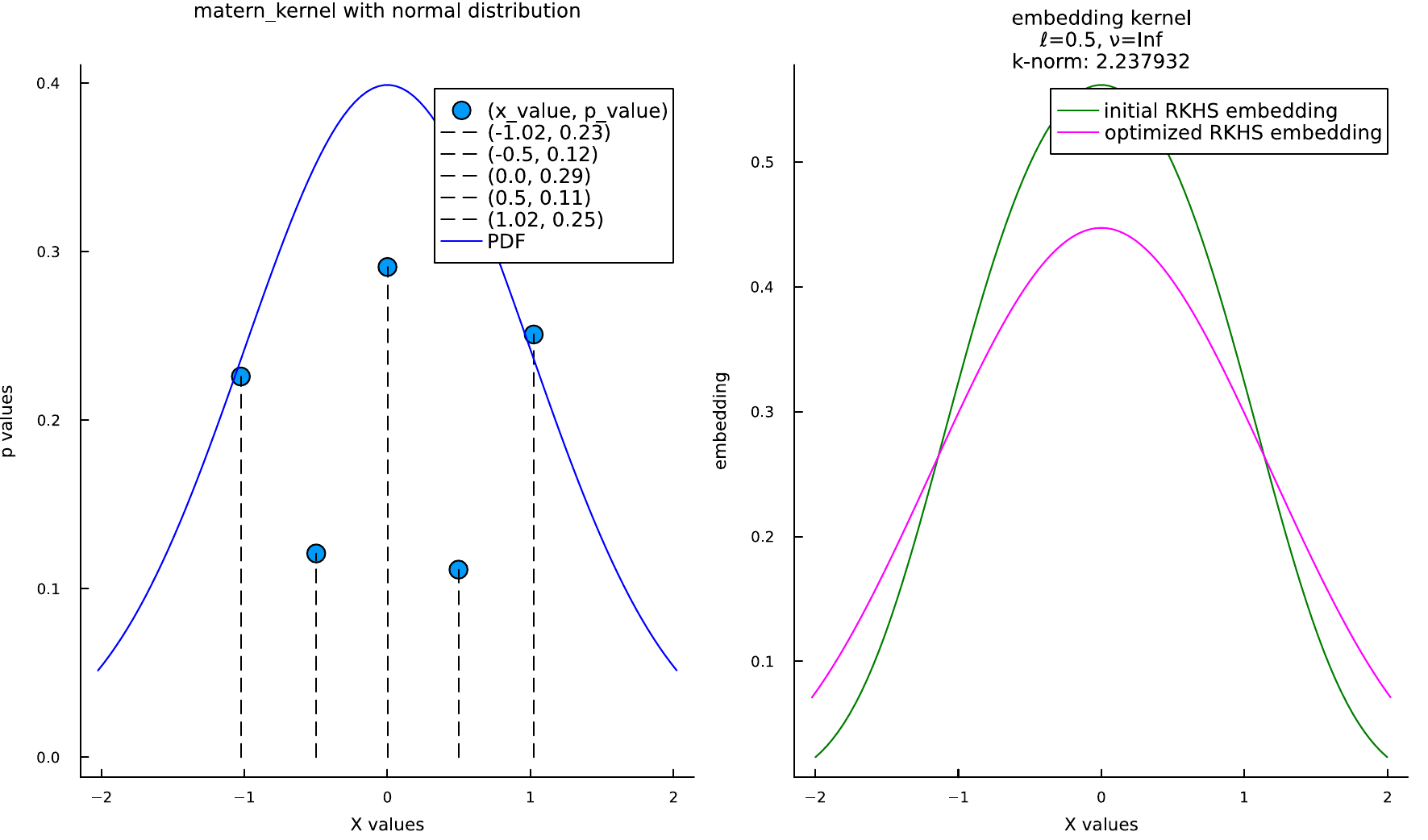}}
  \caption{\label{fig:ab}Approximation of the normal distribution ($P$) with Matérn kernels with different bandwidth~$\ell$ and smoothness~$\nu$ with $n=5$ support points. Density of~$P$ with probability mass function (left), and their embeddings~$P_k,\ P_k^5\in \mathcal H_k$ (right); cf.\ also Table~\ref{tab:normal_distribution_results}}
\end{figure}





\begin{table}[H] 
    \centering
    \begin{tabular}{ccc}
        \toprule
        bandwidth~$\ell$ & smoothness $\nu$ & $\MMD$ distance\\
        \midrule
        0.1 & 0.5 & 1.30790\\
        0.1 & 2.5 & 1.11556\\
        0.1 & $\infty$ & 1.80199\\
        0.5 & 0.5 & 1.62808\\
        0.5 & 2.5 & 1.38039\\
        0.5 & $\infty$ & 2.23793\\
        \bottomrule
    \end{tabular}
    \caption{\label{tab:1}Approximations obtained for the normal distribution, cf.\ Figure~\ref{fig:ab}}\label{tab:normal_distribution_results}
\end{table}

\begin{figure}[ht]
  \centering
  \subfloat[][$\ell=0.1$, $\nu=\nicefrac12$]
  {\includegraphics[width=0.49\textwidth]{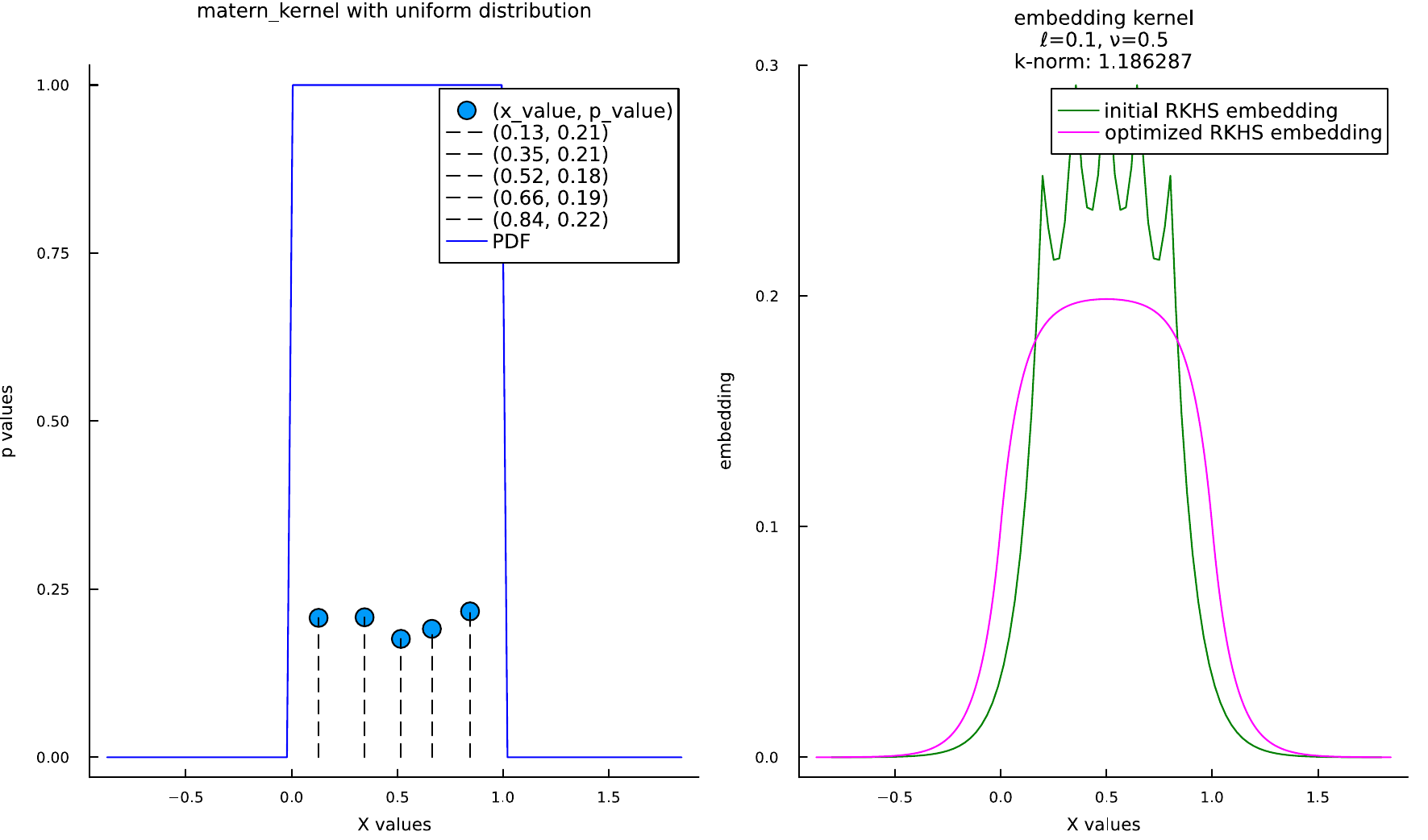}\label{fig:a21}}
  \hfill
  \subfloat[][$\ell=0.1$, $\nu=\infty$]
  {\includegraphics[width=0.49\textwidth]{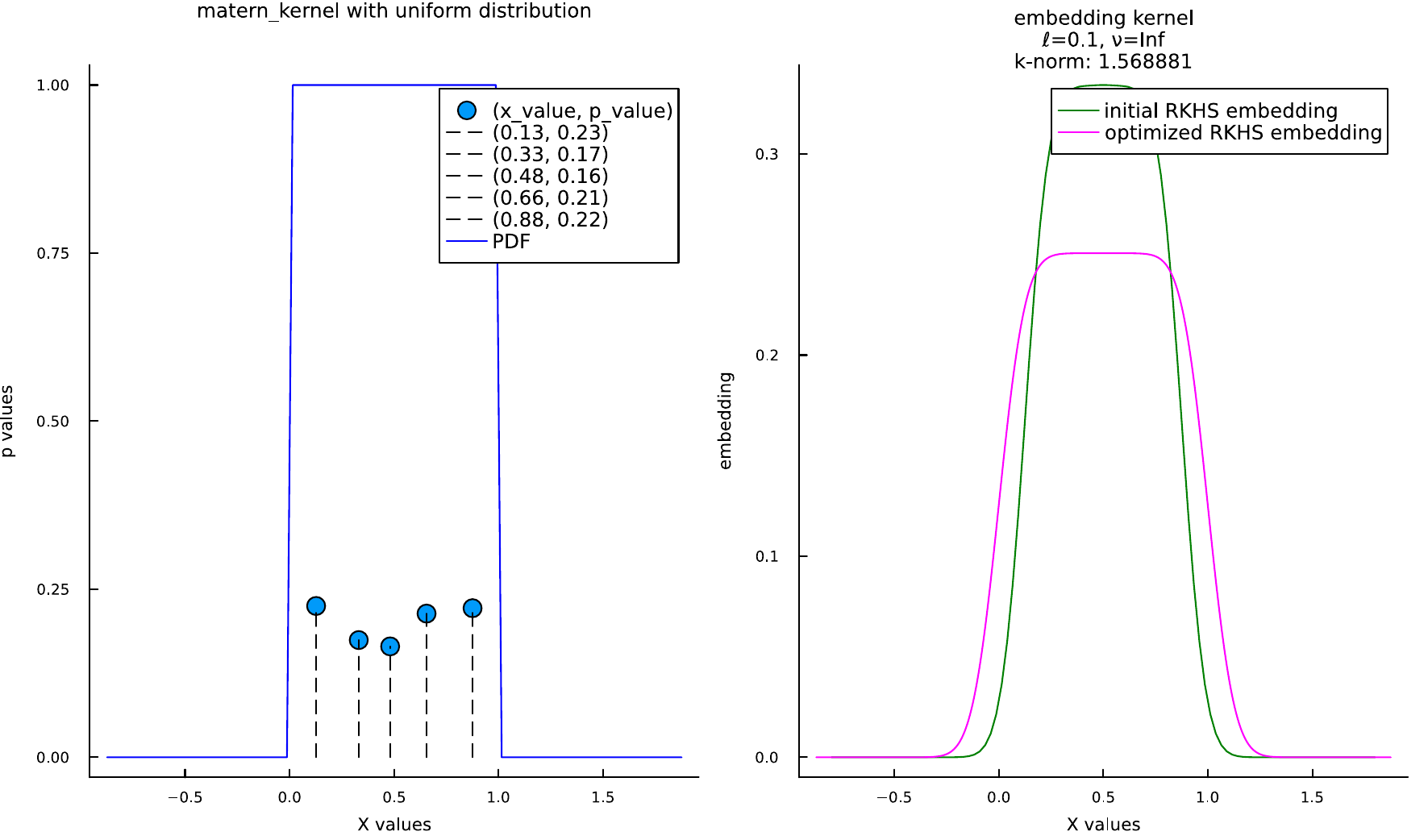}\label{fig:b21}}

  \subfloat[][$\ell=0.5$, $\nu=\nicefrac12$]
  {\includegraphics[width=0.49\textwidth]{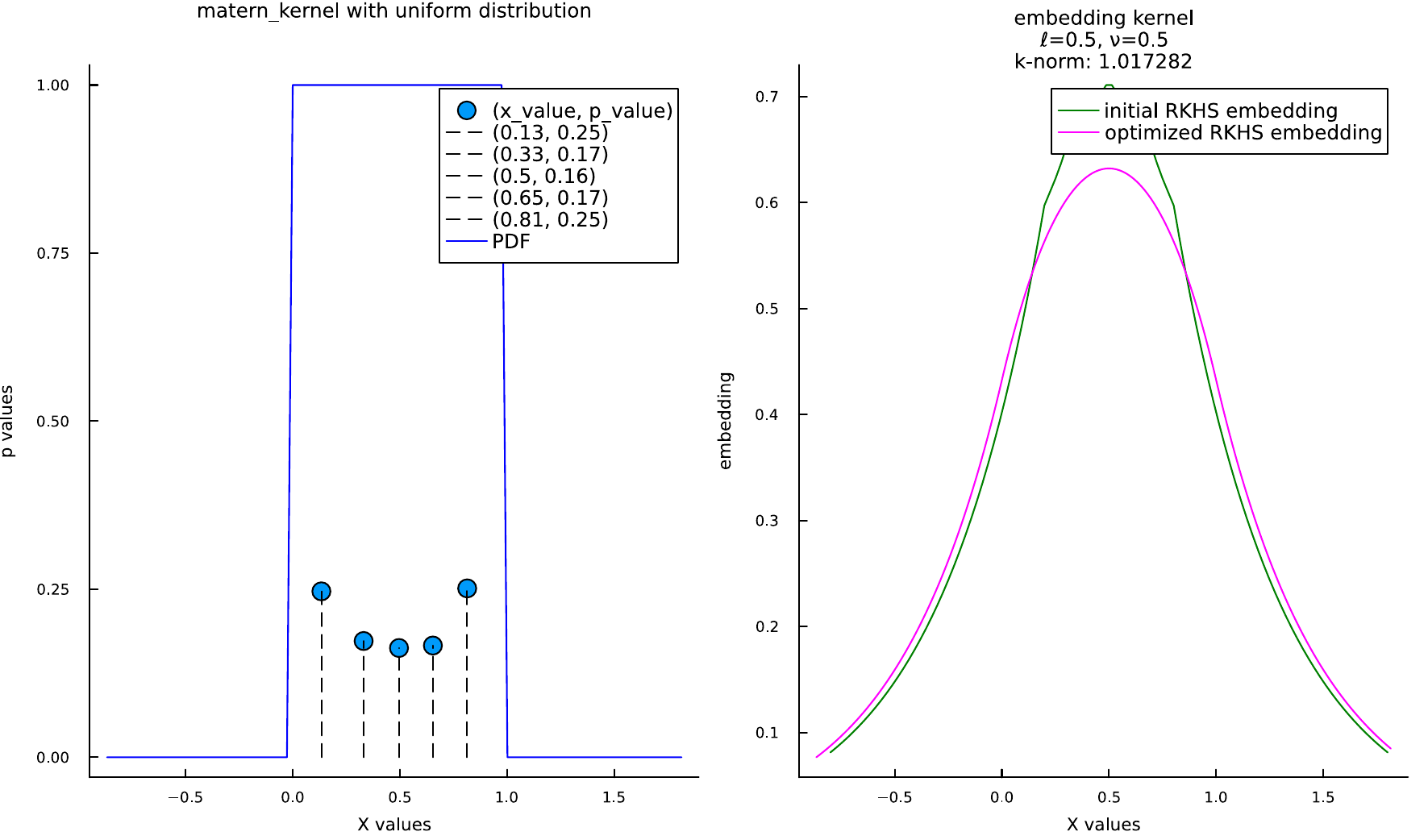}\label{fig:a22}}
  \hfill
  \subfloat[][$\ell=0.5$, $\nu= \infty$]
  {\includegraphics[width=0.49\textwidth]{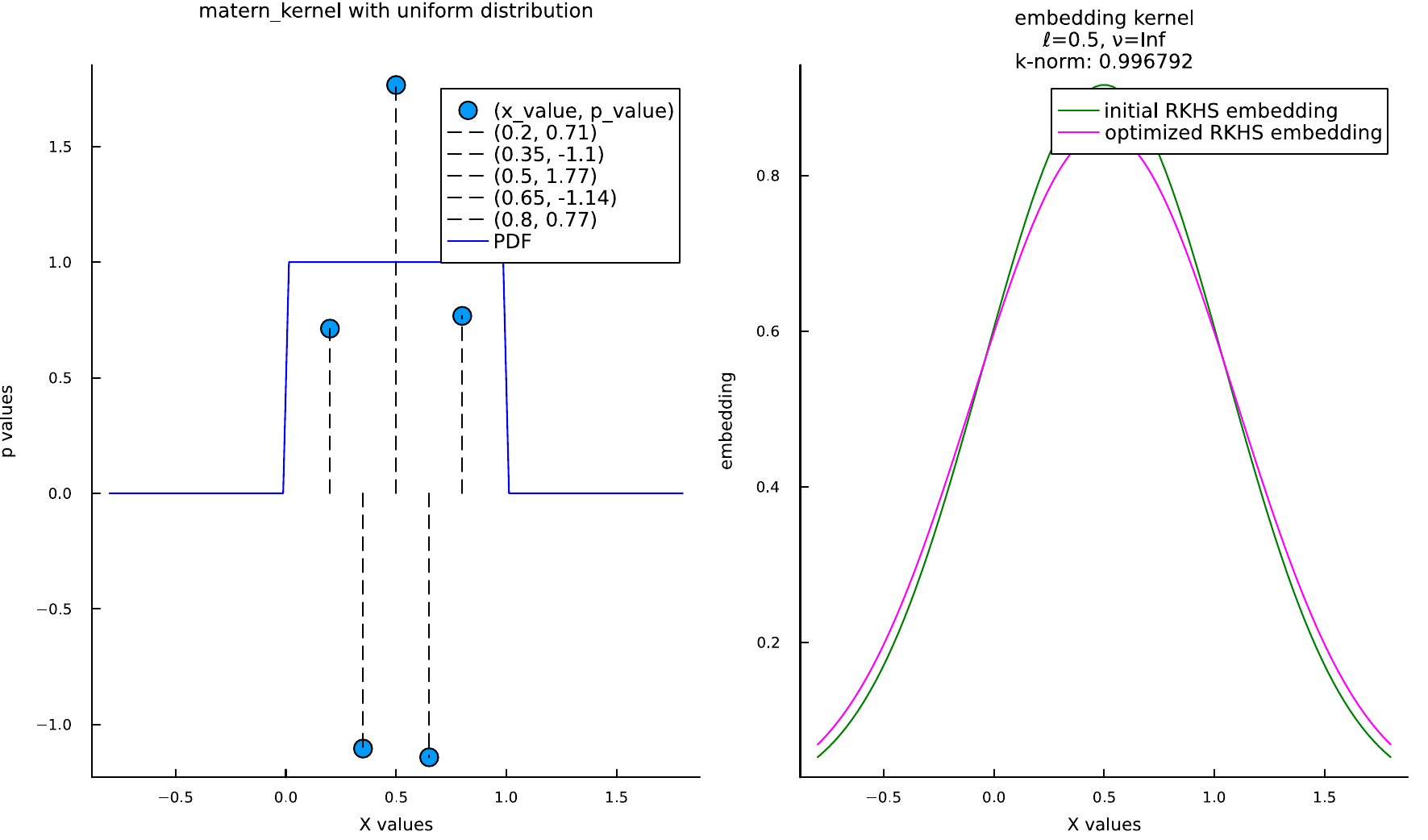}\label{fig:b22}}
  
  \caption{\label{fig:ab2}Approximations of the uniform distribution, cf.\ also Table~\ref{tab:uniform_distribution_results}}
\end{figure}





\begin{table}[H] 
    \centering
    \begin{tabular}{ccc}
        \toprule
        bandwidth~$\ell$ & smoothness~$\nu$ & $\MMD$~distance \\
        \midrule
        0.1 & 0.5 & 1.18629\\
        0.1 & 2.5 & 0.97844\\
        0.1 & $\infty$ & 1.56888\\
        0.5 & 0.5 & 1.01728\\
        0.5 & 2.5 & 0.69118\\
        0.5 & $\infty$ & 0.99679\\
        \bottomrule
    \end{tabular}
    \caption{\label{tab:uniform_distribution_results}Approximation distances obtained for the uniform distribution, cf.\ Figure~\ref{fig:ab2}}
\end{table}

\begin{figure}[]
  \centering
  \subfloat[][bandwidth $\ell=0.1$, smoothness $\nu= \nicefrac12$]
  {\includegraphics[width=0.49\textwidth]{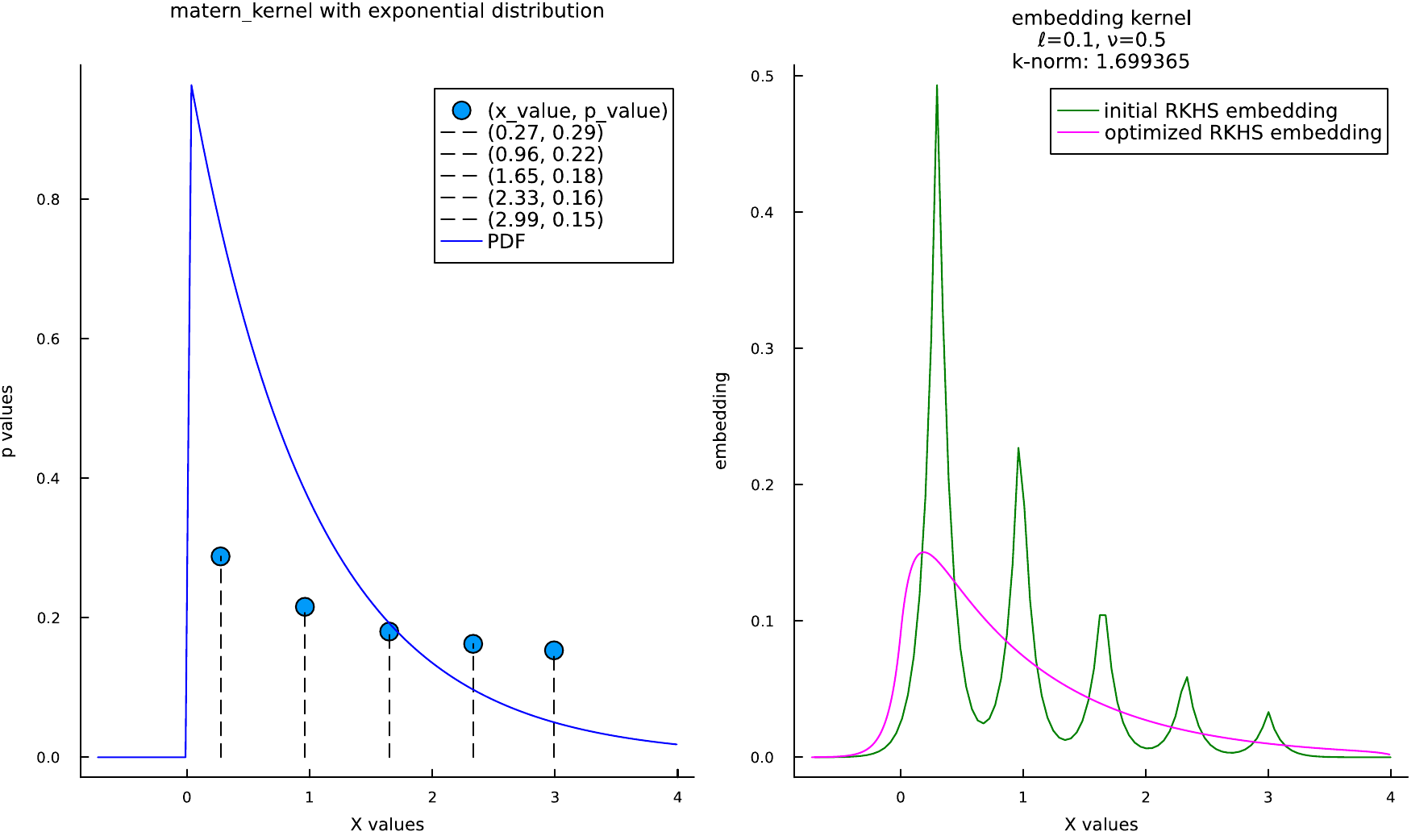}\label{fig:a31}}
  \hfill
  \subfloat[][$\ell=0.1$, $\nu=\infty$]
  {\includegraphics[width=0.49\textwidth]{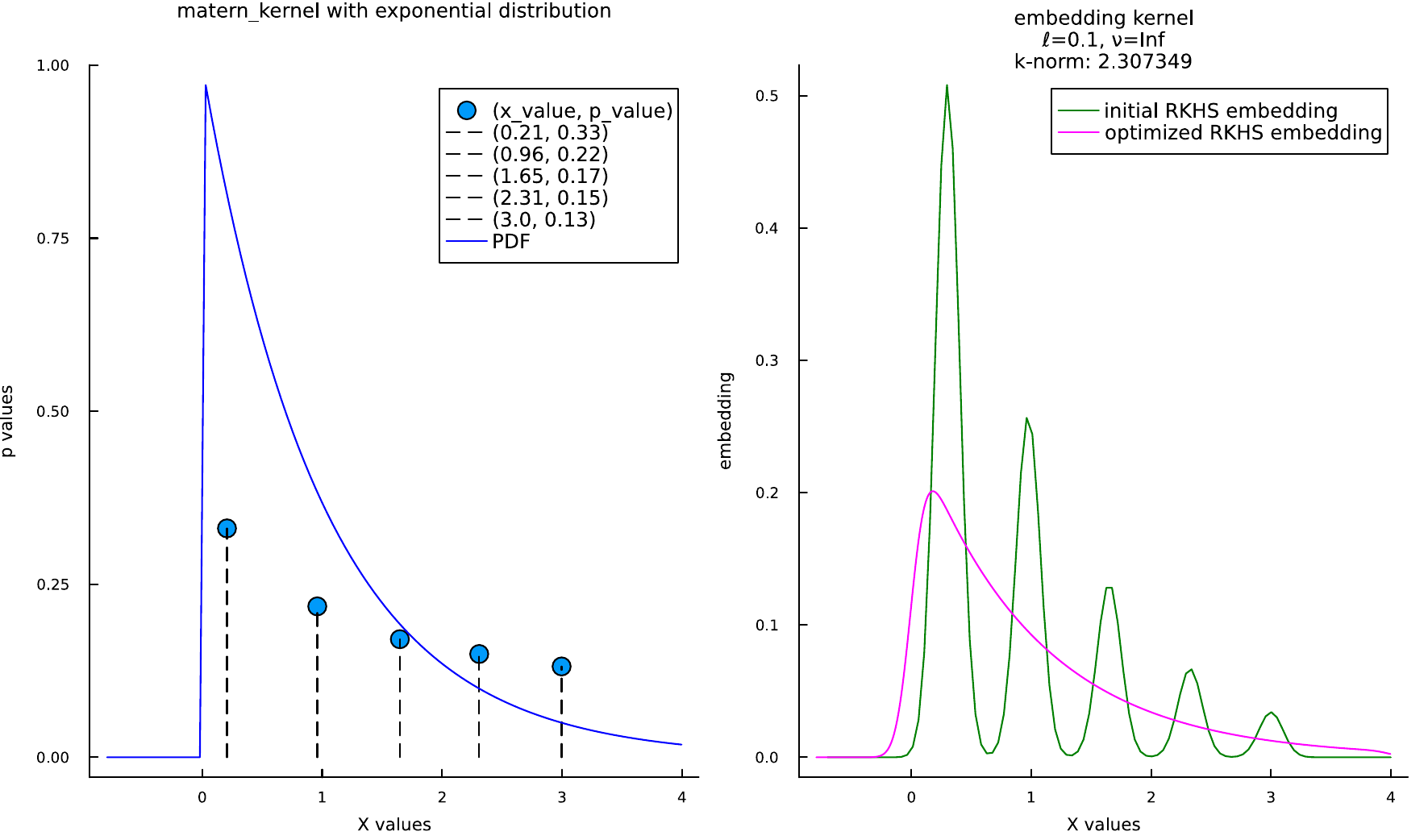}\label{fig:b31}}\\
  \subfloat[][bandwidth $\ell=0.1$, smoothness $\nu= \nicefrac12$]
  {\includegraphics[width=0.49\textwidth]{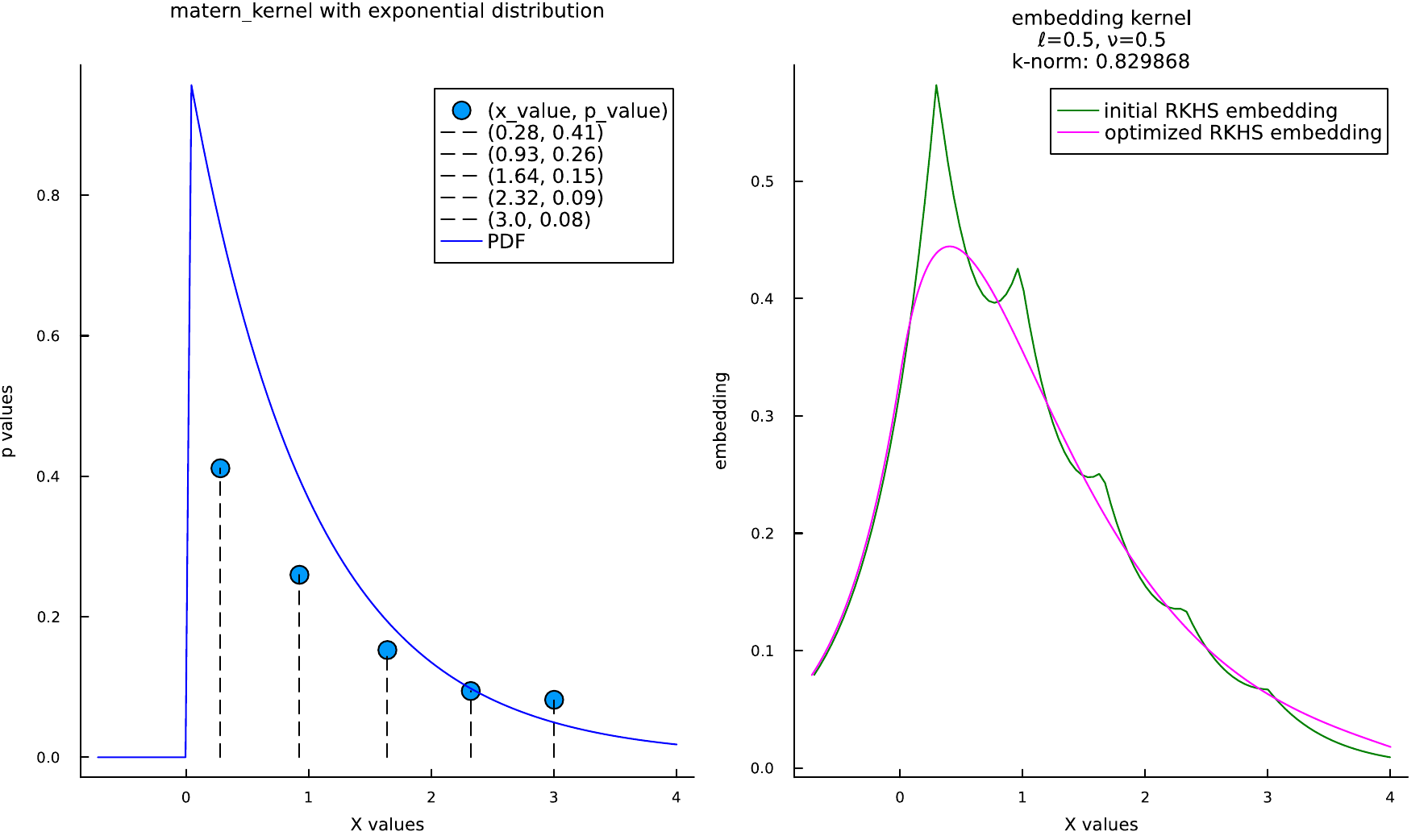}\label{fig:a32}}
  \subfloat[][$\ell=0.5$, $\nu=\infty$]
  {\includegraphics[width=0.49\textwidth]{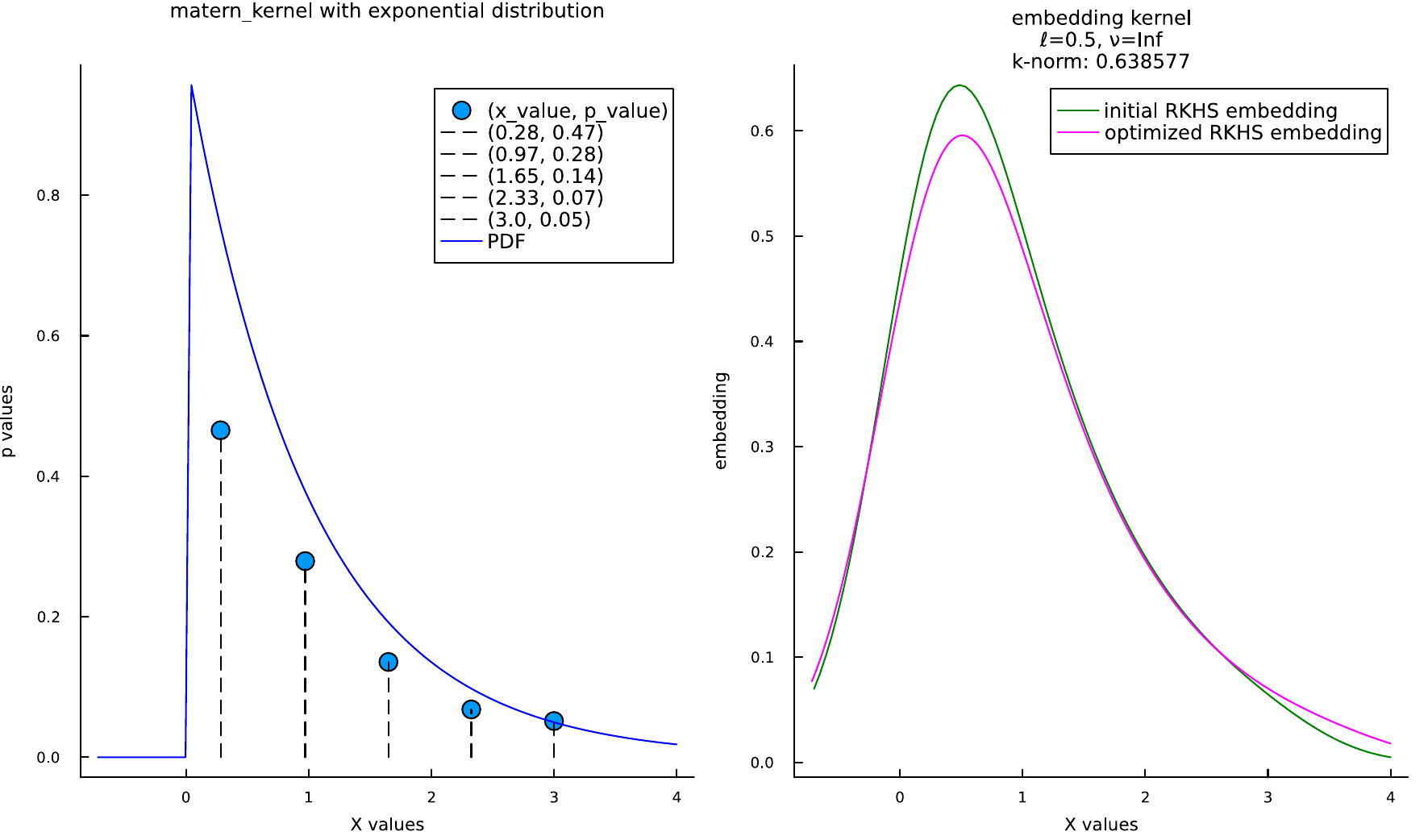}\label{fig:b2}}
  \caption{\label{fig:ab3}Quantization of the exponential distribution, cf.\ Table~\ref{tab:exponential_distribution_results}}
\end{figure}





\begin{table}[H] 
    \centering
    \begin{tabular}{ccc}
        \toprule
        bandwidth~$\ell$ & smoothness~$\nu$ & $\MMD$ distance  \\
        \midrule
        0.1 & 0.5 & 1.69937\\
        0.1 & 2.5 & 1.43278\\
        0.1 & $\infty$ & 2.30735\\
        0.5 & 0.5 & 0.82987\\
        0.5 & 2.5 & 0.42443\\
        0.5 & $\infty$ & 0.63858\\
        \bottomrule
    \end{tabular}
    \caption{\label{tab:exponential_distribution_results}Quantization quality of the exponential distribution, cf.\ Figure~\ref{fig:ab3}}
\end{table}

In conclusion, the results confirm the effectiveness and efficiency of our optimization approach across various distributions, demonstrating accurate solutions and robust performance. The stochastic gradient descent algorithm proves to be a versatile and powerful tool for kernel optimization in practical applications.

\section{Summary}\label{sec:1141}
The paper explores quantization in the context of maximum mean discrepancy distance for measures, with a particular focus on probability measures.
In quantization, a given measure is approximated by a simple measure that consists of many support points and corresponding weights. This approach makes the original measure tractable for numerical computations.

Typically, the approximating measure is not itself a probability measure.
To address this, the paper provides a method for determining the optimal weights, ensuring that the approximating measure is a probability measure.
The task of computing the optimal locations, however, presents a non-linear optimization problem. The paper introduces a reformulation on a product space, which allows for the application of stochastic gradient methods.

Future work will involve numerical experiments in higher dimensions, as well as the quantization of stochastic processes. This will link to the theory of nested optimal transport, as elaborated in \citet{PflugPichlerBuch}, for example.

\bibliographystyle{abbrvnat}
\bibliography{~/Dropbox/Literatur/LiteraturAlois,LiteratureMehraban}

\end{document}